\newtheorem{algorithm}{Algorithm}
\title{Computation of extremal eigenvalues of high-dimensional lattice-theoretic tensors via tensor-train decompositions}
\author{H.~Hakula\footnotemark[2]
\and P.~Ilmonen\footnotemark[2]
\and V.~Kaarnioja\footnotemark[2]~\footnotemark[3]
}
\begin{document}
\maketitle

\renewcommand{\thefootnote}{\fnsymbol{footnote}}

\footnotetext[2]{Aalto University, Department of Mathematics and Systems Analysis, P.O. Box 11100, FI-00076 Aalto, Finland (harri.hakula@aalto.fi, pauliina.ilmonen@aalto.fi, vesa.kaarnioja@helsinki.fi). %
The work of HH was supported by the European Union's Seventh 
Framework Programme (FP7/2007--2013) ERC grant agreement no 339380. The work of VK was supported by the Academy of Finland (decision 267789).}
\footnotetext[3]{Department of Mathematics and Statistics, P.O. Box~68 (Gustaf H\"{a}llstr\"{o}min katu 2B), FI-00014 University of Helsinki, Finland.}

\begin{abstract}
This paper lies in the intersection of several fields: number theory, lattice theory, multilinear algebra, and scientific computing. We adapt existing solution algorithms for tensor eigenvalue problems to the tensor-train framework. As an application, we consider eigenvalue problems associated with a class of lattice-theoretic meet and join tensors, which may be regarded as multidimensional extensions of the classically studied meet and join matrices such as GCD and LCM matrices, respectively. In order to effectively apply the solution algorithms, we show that meet tensors have an explicit low-rank tensor-train decomposition with sparse tensor-train cores with respect to the dimension. Moreover, this representation is independent of tensor order, which eliminates the so-called curse of dimensionality from the numerical analysis of these objects and makes the solution of tensor eigenvalue problems tractable with increasing dimensionality and order. For LCM tensors it is shown that a tensor-train decomposition with an a priori known maximal TT rank exists under certain assumptions. We present a series of easily reproducible numerical examples covering tensor eigenvalue and generalized eigenvalue problems that serve as future benchmarks. The numerical results are used to assess the sharpness of existing theoretical estimates.
\end{abstract}

\renewcommand{\thefootnote}{\arabic{footnote}}

\begin{keywords}
tensor-train decomposition, tensor eigenvalue, symmetric high-order power method, generalized eigenproblem adaptive power method, GCD, semilattice
\end{keywords}

\begin{AMS}
15A69, 15A18, 15B36, 11C20, 06A12
\end{AMS}

\pagestyle{myheadings}
\thispagestyle{plain}
\markboth{H. HAKULA, P. ILMONEN, AND V. KAARNIOJA}{COMPUTING EIGENVALUES OF LATTICE-THEORETIC TENSORS}

\section{Introduction}
\label{intro}
This paper lies in the intersection of several fields: number theory, lattice 
theory, multilinear algebra, and scientific computing.

Tensors (also called hypermatrices or multidimensional arrays by other authors) 
are a multivariate generalization of matrices and they appear frequently in a variety 
of applications spanning different fields of mathematics. In the numerical analysis 
of tensors, an immediate concern that arises is the so-called \emph{curse of dimensionality}. 
When considered as an array, the number of elements that must be stored increases 
exponentially with respect to the order of the tensor.

Tensors arising from applications tend to possess some underlying structure, which can be used to overcome the associated computational
limitations. This hidden structure can be uncovered by using tensor decompositions, such as the canonical decomposition and the Tucker decomposition, both of which were introduced already by Hitchcock in 1927~\cite{hitchcock}. These formats can be used to reduce the storage and computational complexity of tensors. For instance, the canonical decomposition allows the storage of
certain classes of tensors in a number of parameters that scale linearly with the tensor order. This comes with the cost of an
additional parameter called the rank of a tensor. The concept of the
tensor rank is analogous to that of matrices, i.e., a rank-$r$ tensor
can be represented by $r$ rank-1 tensors within the tensor
decomposition paradigm. Unlike the case of matrices, it may happen 
that the optimal rank $r$ is much higher than the
dimension of the tensor~\cite{Krus89} with its value contingent on
the type of decomposition that is being used. The major setbacks of
these methods are based on the difficulty of forming the
decomposition: it either has to be known analytically or an
approximation must be sought using optimization-based methods.

The previous decade saw the introduction of a new generation of
tensor decompositions: the hierarchical Tucker
format~\cite{Grasedyck10,HackKuhn09} and the tensor-train
decomposition~\cite{Osel11,breaking}, where the latter may be regarded as a
special case of the former~\cite{Hackbusch14}. These formats have
been designed so that a representative can be found for \emph{any}
tensor in a stable, algorithmic series of operations within
prescribed accuracy. Moreover, both formats allow interpolation
within the decomposition framework. This means that if, say, each
element of the tensor can be computed by an evaluation of a function
$f\!:\mathbb{R}^{n\times\cdots\times n}\to\mathbb{R}$, then the
tensor decomposition may be formed without ever explicitly constructing the full
$n\times\cdots\times n$ array of values, thus bypassing the curse of
dimensionality. While the precise convergence conditions of these
interpolation methods still remain largely unknown, this methodology has
been applied successfully to many problems of interest such as
high-dimensional integration~\cite{OselTyr10}, solution of stochastic
partial differential equations~\cite{Dolgov15}, and extremal
eigenvalues of tensor products of matrices~\cite{Dolgov14}. We refer to the comprehensive survey~\cite{GKT13} for a detailed overview and applications of low-rank tensor approximation techniques.

The eigenvalues of tensors were defined independently by Lim~\cite{Lim} and Qi~\cite{Qi} in 2005. There has been a surge of activity in the tensor
eigenvalue solution techniques since then, see for instance~\cite{KoMa10} and \cite{KoMa14},
and references therein. The dominant real eigenpair of a symmetric tensor can be used to construct its best symmetric rank-1 approximation~\cite{KofReg02,Lathauwer00}. On the other hand, the minimal real eigenvalue of a symmetric tensor is an indicator of the positive definiteness of a symmetric tensor. Tensor eigenvalue solution techniques can be used to analyze these important characteristics, which have immediate applications in diffusion tensor imaging~\cite{dti3,dti2,dti4}, automatic control~\cite{Qi,ac}, image authenticity verification~\cite{iav}, the multilinear PageRank algorithm~\cite{pagerank}, and so on. The limiting probability distributions of higher order Markov chains can be determined by solving the dominant eigenvectors of transition probability tensors, and properties of limiting probability distributions are intertwined with the spectral theory of nonnegative tensors~\cite{markov,Ng09}. Tensor eigenvalue problems have
emerged in other contexts as well, such as analysis of Newton's method~\cite{NLA:NLA1883},
and this trend is likely to continue as the solution techniques are developed further. We refer to the monograph~\cite{qibook} for further details on the spectral theory of tensors.

In this work, we have adapted the symmetric high-order power method (S-HOPM) and generalized eigenproblem adaptive power method (GEAP) for use with TT tensors, which drastically lessens the computational complexity of these algorithms --- and which may be useful in future applications beyond the lattice-theoretic tensors considered in this paper. To accommodate these algorithms, we consider in this treatise the eigenvalue problems associated with lattice-theoretic meet and join tensors, specifically GCD and LCM tensors. As the foundation of the numerical tests, an explicit low-rank tensor-train decomposition for meet tensors that is completely independent of the tensor order $d$~is derived. The TT cores comprising this representation are sparse with respect to the dimension $n$, and in the case of the so-called Smith tensor, the number of nonzero elements that must be stored is bounded by $\mathcal{O}(n\ln n)$. In particular, this ensures that numerical experiments can be carried out in very high dimensions. The decomposition theory regarding join tensors is still undeveloped, and so we construct the TT decomposition for LCM tensors numerically using the TT-DMRG cross algorithm. Moreover, we prove that a tensor-train decomposition exists under certain assumptions with an a priori known maximal TT rank. This implies, in consequence, that the number of parameters required to represent LCM tensors in the TT format is bounded by a term depending only moderately on the tensor order. Using the TT decompositions of GCD and LCM tensors in conjunction with the eigenvalue algorithms modified for use with the TT format, we present a series of easily reproducible numerical examples covering tensor eigenvalue and generalized eigenvalue problems that serve as future benchmarks. The numerical results are used to assess the sharpness of the existing theoretical estimates. We note that GCD and LCM tensors provide excellent benchmarks since these tensors form a large group of non-sparse and non-random tensors, which possess easily constructible low-rank and --- in the case of GCD tensors --- sparse tensor decompositions.

\subsection{Outline of the paper}
The rest of this paper is organized as follows:
First the notation and basic definitions regarding tensor eigenvalues and the tensor-train decomposition are given in Section~\ref{notations}.
We derive the implementations of numerical eigenvalue algorithms in the tensor-train formalism in Section~\ref{TTalgorithms}. In Section~\ref{meetandjointensors}, we move onto the properties of lattice-theoretic meet and join tensors. Subsection~\ref{meetTT} contains the derivation of a
tensor-train representation for meet tensors and Subsection~\ref{joinTT} considers the implicit representation of join tensors.
The numerical experiments are presented in Section~\ref{numex}. Finally, discussion on results and future prospects
concludes the paper.
\section{Notations and preliminaries}
\label{notations}
\subsection{Symmetric tensors and tensor eigenvalue problems}
An $n$-dimen-sional tensor $A$ of order $d$ (over a scalar field) is defined as an array of $n^d$ elements
\[
A_{i_1,\ldots,i_d}
\]
for $i_1,\ldots,i_d\in\{1,\ldots,n\}$. The tensor $A$ is said to be symmetric (see~\cite{Comon08}) if its entries are invariant over any permutation of its indices.

Let $x=(x_1,\ldots,x_n)^\top$. The mode-$k$ contraction of the tensor $A$ against the vector $x$ is defined as a new tensor $B=A\times_k x$, the elements of which are given by
\[
B_{i_1,\ldots,i_{k-1},1,i_{k+1},\ldots,i_d}=\sum_{i_k=1}^nA_{i_1,\ldots,i_k,\ldots,i_d}x_{i_k}
\]
for $i_1,\ldots,i_{k-1},i_{k+1},\ldots,i_d\in\{1,\ldots,n\}$. A symmetric $n$-dimensional real tensor $A$ of order $d$ uniquely defines a homogeneous (real) polynomial $p(x)=Ax^d$ of degree $d$ in $n$ variables, where
\[
Ax^d=A\times_1x\times_2\cdots\times_dx.
\]
A tensor $A$ is called positive definite, if the values of this polynomial satisfy
\[
p(x)>0\quad\text{for all }~x\in\mathbb{R}^n\setminus\{0\}.
\]
It follows that a symmetric tensor $A$ can be positive definite only when $d$ is even.

Similarly to the above, let $Ax^{d-1}$ denote a vector in $\mathbb{R}^n$ and $Ax^{d-2}$ a matrix in $\mathbb{R}^{n\times n}$ defined by setting
\[
Ax^{d-1}=A\times_2x\cdots\times_dx\quad\text{and}\quad Ax^{d-2}=A\times_3x\cdots\times_dx,
\]
respectively. Moreover, let $x^{[\alpha]}$ denote a vector in $\mathbb{R}^n$ given by
\[
x^{[\alpha]}=(x_1^\alpha,\ldots,x_n^\alpha)^\top
\]
for $\alpha\in\mathbb{R_+}$.

The eigenvalue problem for tensors as considered in this paper
was first introduced by Lim~\cite{Lim} and Qi~\cite{Qi}. It may be regarded as a special case of the following generalized tensor eigenvalue problem discussed by Chang, Pearson, and Zhang~\cite{ChaPeaZha09}, and both have been considered by several authors since~\cite{CuiDaiNie14,KoMa14}.
\begin{definition}[Generalized eigenvalue problem]
Let both $A$ and $B$ be $n$-dimensional tensors of order $d$. Then $\lambda\in\mathbb{R}$ is called a $B$-eigenvalue if
\begin{equation}
Ax^{d-1}=\lambda Bx^{d-1},\label{Bevp}
\end{equation}
where $x\in\mathbb{R}^n\setminus\{0\}$ is the $B$-eigenvector.
\end{definition}

The generalized eigenvalue problem encapsulates several classes of tensor eigenvalue problems of interest.
\begin{definition}[H-eigenvalue problem]
Let $A$ be an an $n$-dimensional tensor of order $d$. The number $\lambda\in\mathbb{R}$ and the vector $x\in\mathbb{R}^n\setminus\{0\}$ are called the H-eigenvalue and H-eigenvector if
\begin{equation}
Ax^{d-1}=\lambda x^{[d-1]}.\label{Hevp}
\end{equation}
This problem is equivalent to solving the problem~\eqref{Bevp} with $B$ given by $B_{i_1,\ldots,i_d}=\delta_{i_1,\ldots,i_d}$, where $\delta$ represents the Kronecker tensor defined as unity whenever its indices coincide and vanishing otherwise.
\end{definition}
\begin{definition}[Z-eigenvalue problem]
Let $A$ be an $n$-dimensional tensor of order $d$. The number $\lambda\in\mathbb{R}$ and the vector $x\in\mathbb{R}^n\setminus\{0\}$ are called the Z-eigenvalue and Z-eigenvector if
\begin{equation}
Ax^{d-1}=\lambda x\quad\text{and}\quad\|x\|=1.\label{Zevp}
\end{equation}
For an even order $d$, this problem is equivalent to solving the problem~\eqref{Bevp} with $B=\mathcal{E}$, the identity tensor such that $\mathcal{E}x^{d-1}=\|x\|^{d-2}x$ for all $x\in\mathbb{R}^n$.
\end{definition}

\emph{Remark.} The complex-valued solutions $\lambda\in\mathbb{C}$ and $x\in\mathbb{C}^{n}\setminus\{0\}$ to~\eqref{Hevp} are simply called eigenvalues and eigenvectors, and the complex solutions to~\eqref{Zevp} are called E-eigenvalues and E-eigenvectors~\cite{Qi}. However, the available solution methods for complex eigenvalues are practically limited to solving systems of polynomial equations in several variables directly, which can be carried out by using, e.g., the polynomial system solver \texttt{NSolve} available in Mathematica. This approach is not sustainable for problems with high dimensionality (order) since it cannot take advantage of the structure of the tensor. Mathematica's \texttt{NSolve} function for polynomial systems utilizes the notoriously computationally expensive solution of Gr\"{o}bner bases, which requires access to all $n^d$ elements of the tensor. While the possibility of an extension of the shifted power method to a certain class of complex tensor eigenvalues has been discussed in~\cite{KoMa10}, it is not covered by the convergence theory established for real-valued shifted power method for tensors and hence it is omitted from this paper. Moreover, solving the whole spectrum of a tensor quickly becomes intractable with increasing dimensionality since the number of eigenvalues increases exponentially with respect to tensor order~\cite{Qi}.

For the reasons discussed above, we restrict our study to real-valued dominant and minimal eigenvalues of even order tensors in this work. Even order ensures that both H- and Z-eigenvalues exist with real solutions to the corresponding eigenvalue-eigenvector equations. The minimal H- and Z-eigenvalues of tensors are particularly important in the highly nontrivial problem of identifying positive definite tensors: It was proven by Qi~\cite{Qi} that an even order, symmetric tensor $A$ is positive definite, if and only, if all of its H-eigenvalues (resp.~Z-eigenvalues) are positive. On the other hand, dominant Z-eigenvalues are closely related to the rank-1 approximation of tensors studied by De Lathauwer et al.~\cite{Lathauwer00} and by Kofidis and Regalia~\cite{KofReg02}. In the sequel, we propose efficient implementations of the power method and adaptive shifted power method for the solution of extremal eigenvalues in the formalism of the tensor-train decomposition in Section~\ref{TTalgorithms}.

The generalized eigenvalue problem may also be expressed succintly as a constrained optimization problem: $(\lambda,x)\in\mathbb{R}\times(\mathbb{R}^n\setminus\{0\})$ is a generalized $B$-eigenpair, if and only if $x$ is critical point and $\lambda$ is a critical value of the problem
\begin{align}
\max Ax^d\quad \text{such that}\quad Bx^d=1.\label{opt}
\end{align}
This restatement of the problem follows immediately by noticing that $\nabla_x(Ax^d)=dAx^{d-1}$ and $\nabla_x(Bx^d)=dBx^{d-1}$, from which it is apparent that the $B$-eigenvalue problem~\eqref{Bevp} is precisely the Lagrange multipliers problem $\nabla_{x,\lambda}\mathcal{L}(x,\lambda)=0$ for the Lagrangian $\mathcal{L}(x,\lambda)=Ax^d-\lambda Bx^d$.

\subsection{Tensor-train decomposition}
\label{ttdecomposition}
The tensor-train decomposition was introduced in~\cite{breaking} as a generalization of the classical rank-1 canonical decomposition of tensors. In his paper~\cite{Osel11}, Oseledets presented an algorithm to approximate a given $n\times\cdots\times n$ tensor $A$ by another tensor $B\approx A$ with elements
\begin{equation*}
B_{i_1,\ldots,i_d}=G_1(i_1)\cdots G_d(i_d),\quad i_1,\ldots,i_d\in\{1,\ldots,n\},
\end{equation*}
where each $G_k(i_k)$ is an $r_{k-1}\times r_k$ matrix and $r_0=r_d=1$. The three-dimensional tensors $G_k$ are called the \emph{TT cores} of the TT decomposition and $r_k$ the \emph{compression ranks}. For general tensors $A$, it was shown by Oseledets that an approximation $B$ in this form can be found in the neighborhood of $A$ within arbitrarily prescribed tolerance $\varepsilon>0$ in the Frobenius norm, viz. $\|A-B\|_F<\varepsilon\|A\|_F$. For notational convenience, we define the transpose of third order TT cores $G_k$, $k\in\{2,\ldots,d-1\}$, as the permutation of their first two modes, i.e.,
\[
(G_k^\top(i_1))_{i_2,i_3}=G_k(i_2)_{i_1,i_3},
\]
where $i_1\in\{1,\ldots,r_{k-1}\}$, $i_2\in\{1,\ldots,n\}$, and $i_3\in\{1,\ldots,r_k\}$.

The compression ranks are bounded below by the ranks of the unfolding matrices $A_{[k]}$ defined elementwise by setting
\begin{equation*}
(A_{[k]})_{(i_1,\ldots,i_k),(i_{k+1},\ldots,i_d)}=A_{i_1,\ldots,i_d},
\end{equation*}
where the multi-indices $(i_1,\ldots,i_k)\in\{1,\ldots,n\}^k$ enumerate the rows and the multi-indices $(i_{k+1},\ldots,i_d)\in\{1,\ldots,n\}^{d-k}$ enumerate the columns of the $n^k\times n^{d-k}$ matrix $A_{[k]}$. Moreover, these compression ranks are also achievable.
\begin{theorem}[cf.~\cite{Osel11}]
There exists a TT decomposition of the $n$-dimensional order $d$ tensor $A$ with compression ranks
\[
r_k={\rm rank}\,A_{[k]},\quad k=1,\ldots,d-1.
\]
\end{theorem}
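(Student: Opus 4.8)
The plan is to give a constructive proof by performing a sequence of rank-revealing factorizations of the successive unfoldings, which is the essence of Oseledets' TT-SVD procedure. Since it has already been observed that every TT decomposition must satisfy $r_k\ge{\rm rank}\,A_{[k]}$, it suffices to exhibit one decomposition that attains these values.

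First I would process the modes from left to right. Writing $A_{[1]}$ as the $n\times n^{d-1}$ unfolding of rank $r_1$, I factor it as $A_{[1]}=U_1B_1$ with $U_1\in\mathbb{R}^{n\times r_1}$ of full column rank and $B_1\in\mathbb{R}^{r_1\times n^{d-1}}$ (e.g.\ obtained from the SVD). The rows of $U_1$ furnish the first core $G_1$, consistent with $r_0=1$. I then reshape $B_1$ into a tensor $A^{(1)}$ carrying indices $(\alpha_1,i_2,\ldots,i_d)$ and repeat: at step $k$ I form the matrix $M_k$ obtained by grouping $(\alpha_{k-1},i_k)$ into its $r_{k-1}n$ rows and $(i_{k+1},\ldots,i_d)$ into its columns, factor $M_k=U_kB_k$ with $U_k$ of full column rank, read off the core $G_k$ from the entries of $U_k$ via $(G_k(i_k))_{\alpha_{k-1},\alpha_k}=(U_k)_{(\alpha_{k-1},i_k),\alpha_k}$, and pass the reshaped $B_k$ to the next stage. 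The final remainder $A^{(d-1)}$, an $r_{d-1}\times n$ matrix, directly supplies $G_d$ with $r_d=1$. Telescoping the products then reconstructs $A_{i_1,\ldots,i_d}=G_1(i_1)\cdots G_d(i_d)$ exactly.

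The crux of the argument --- and the step I expect to be the main obstacle --- is verifying that the factored matrix $M_k$ has rank exactly $r_k={\rm rank}\,A_{[k]}$, so that the factorization $M_k=U_kB_k$ indeed produces a core of the claimed width rather than merely an upper bound. To this end I would track the accumulated left factor $L_{k-1}\in\mathbb{R}^{n^{k-1}\times r_{k-1}}$ assembling the product of $G_1,\ldots,G_{k-1}$, and establish the key identity
\[
A_{[k]}=(L_{k-1}\otimes I_n)\,M_k,
\]
together with the fact that $L_{k-1}$ has full column rank $r_{k-1}$. The latter follows inductively, since $L_{k-1}$ is built from the full-column-rank factors $U_1,\ldots,U_{k-1}$, and consequently $L_{k-1}\otimes I_n$ has full column rank $r_{k-1}n$. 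As multiplication by a matrix of full column rank preserves rank, the identity yields ${\rm rank}\,M_k={\rm rank}\,A_{[k]}=r_k$, which is precisely what is needed to control the core dimensions at every stage.

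Finally, I would confirm the boundary conditions $r_0=r_d=1$ and check the index bookkeeping in the reshaping operations and in the Kronecker identity above; these are routine but must be carried out carefully so that the grouping $(\alpha_{k-1},i_k)$ is consistent with the ordering implicit in $L_{k-1}\otimes I_n$ (the factor $I_n$ acting on the index $i_k$). Assembling these pieces produces a valid TT decomposition whose compression ranks coincide with ${\rm rank}\,A_{[k]}$, completing the proof.
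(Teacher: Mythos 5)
Your construction is correct and is essentially the proof that the paper relies on: the statement is quoted from Oseledets~\cite{Osel11} without an in-text proof, and the argument there is precisely your sequential rank-revealing factorization (the TT-SVD), including the decisive step of propagating the identity $A_{[k]}=(L_{k-1}\otimes I_n)M_k$ with $L_{k-1}$ of full column rank so that ${\rm rank}\,M_k={\rm rank}\,A_{[k]}$ at every sweep. The index bookkeeping and the induction on the full column rank of the accumulated left factors are handled correctly, so there is no gap.
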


This motivates the definition of the maximal \emph{tensor-train rank} of tensor $A$ by setting
\[
{\rm rank}_{\rm TT}A=\max_{1\leq k\leq d-1}{\rm rank}\, A_{[k]}.
\]

The TT decomposition possesses many desirable properties in view of mitigating the curse of dimensionality, such as the storage of the tensor $B$ in $(d-2)nr^2+2dr$ parameters. Furthermore, the complexity of many important operations in linear algebra scales linearly with respect to $d$ in the TT format. In this formalism, the mode-$k$ contraction of tensor $B$ --- now characterized completely by its constituent TT cores $(G_k)_{k=1}^d$ --- against a vector $x\in\mathbb{R}^n$ can be expressed as
\[
(B\times_k x)_{i_1,\ldots, i_{k-1},1,i_{k+1},\ldots,i_d}=G_1(i_1)\cdots G_{k-1}(i_{k-1})(x^\top G_k^\top)G_{k+1}(i_{k+1})\cdots G_d(i_d),
\] 
where the vector-tensor product is understood as a vector-matrix product over each matrix $G_k^\top(\cdot)$ such that
\[
x^\top G_k^\top=\begin{pmatrix}x^\top G_k^\top (1)\\ \vdots\\ x^\top G_k^\top(r_{k-1})\end{pmatrix}.
\]

If the elements of the tensor $A$ can be represented by a functional $f\!:\mathbb{R}^{n\times\cdots\times n}\to\mathbb{R}$ such that $A_{i_1,\ldots,i_d}=f(i_1,\ldots,i_d)$, then the TT decomposition can be formed implicitly by using cross-interpolation. An effective implementation is based on the notions of the density matrix renormalization group (DMRG)~\cite{White93} and the matrix CUR decomposition~\cite{GorZamTyr97}. Like the ALS method used to obtain an approximate canonical decomposition, the precise convergence criteria of the DMRG scheme remain unknown at the time of writing. An additional advantage of the DMRG method besides cross-interpolation is that it is rank-revealing and thus yields a compressed TT representation for tensors with low rank. In the following, we give a brief overview of the TT-DMRG cross algorithm. This scheme has been covered before in the literature and we recommend the interested readers to see~\cite{SavOsel11} for a detailed description.

The TT-DMRG cross algorithm attempts to find a TT tensor $B$ with TT cores $(G_k)_{k=1}^d$ such that
\begin{equation}
\min_{G_1,\ldots,G_d}\|A-B\|_F.\label{als}
\end{equation}
In the DMRG scheme, the optimization is performed over two TT cores $G_k$ and $G_{k+1}$ at a time in successive sweeps starting from $k=1$ and proceeding to $k=d-1$ and then from $k=d-1$ back to $k=1$. At the $k^\text{th}$ sweep, the structure of the TT format transforms the problem of finding the supercore $W_k(i_k,i_{k+1})=G_k(i_k)G_{k+1}(i_{k+1})$ solving~\eqref{als} into a quadratic optimization problem, which is equivalent to solving a small linear system. The TT cores $G_k$ and $G_{k+1}$ can be recovered from the SVD of $W_k$ producing a new approximation with an updated compression rank $r_k$. The next relevant core $W_k$~is determined by using the maximum volume principle on the most important modes $A_{i_1,\ldots,i_{k-1},\cdot,\cdot,i_{k+2},\ldots,i_d}$. The maximum volume principle can be implemented numerically using the computationally inexpensive MaxVol algorithm~\cite{GorOse10}.
\section{Algorithms for tensor eigenvalue problems in the TT format}\label{TTalgorithms}
In recent years, several schemes have been proposed to solve the tensor eigenvalue problems \eqref{Bevp}--\eqref{Zevp}. Ng, Qi, and Zhou~\cite{Ng09} studied the generalization of the classical matrix power method to determine the dominant H-eigenpairs of symmetric tensors. A power method for tensors was studied by De Lathauwer et al.~\cite{Lathauwer00} and by Kofidis and Regalia~\cite{KofReg02} in the context of optimal rank-1 approximations of tensors, which has since been adapted to solving Z-eigenvalues. Kolda and Mayo proposed a shifted power method to determine all $B$-eigenpairs \cite{KoMa14}, which was developed concurrently with another method to solve all $B$-eigenpairs in the polynomial optimization framework \cite{CuiDaiNie14}. However, the aforementioned solution schemes are subject to the curse of dimensionality when the tensor $A$ is considered as a na\"{i}ve $n\times\cdots\times n$~array since the evaluation of the product $Ax^d$ requires $n^d$ evaluations. By taking the symmetry into account, the storage cost of an $n$-dimensional order $d$ tensor is of order $\mathcal{O}(n^d/d!)$.

When the array is given in the TT format, the evaluation of the products $Ax^d$, $Ax^{d-1}$, and $Ax^{d-2}$ scales linearly with respect to $d$ which significantly improves the performance of the existing algorithms. In this section, we describe the efficient implementation of these algorithms for tensors given in the TT format. While the power method and shifted power method for tensor eigenvalues have been covered in the literature before (see the references above), their efficient application for tensors given in the TT format has not been covered in the existing literature and is presented in the following subsections. The algorithms can be applied to \emph{any} tensor given in the TT format --- and since a TT decomposition of an arbitrary tensor can be determined within numerical precision using, e.g., the TT-DMRG cross scheme, the algorithms presented in the following are applicable for more general classes of tensors than the lattice-theoretic tensors considered in this paper. Naturally, the algorithms are efficient provided that the TT decomposition is either low-rank or the TT cores are sufficiently sparse.
\subsection{Symmetric high-order power method in the TT format}
The largest eigenvalues of a symmetric $n$-dimensional order $d$ tensor $A$ can be determined by using a power method analogous to that of matrices~\cite{KofReg02,Ng09}. For any initial guess $x_0\in\mathbb{R}^n$, $\|x_0\|=1$, the \emph{symmetric high-order power method} (S-HOPM) is defined by the successive iterates
\[
y_k=Ax_{k-1}^{d-1},\quad x_{k}=\frac{y_{k-1}^{\left[\frac{1}{m-1}\right]}}{\big\|y_{k-1}^{\left[\frac{1}{m-1}\right]}\big\|},\quad\text{and}\quad\lambda_{k}=\frac{Ax_{k}^d}{\|x\|_m^m},\quad k=1,2,\ldots,
\]
where $\|\!\cdot\!\|_p$ denotes the $p$-norm. Setting $m=d$ corresponds to H-eigenvalues and $m=2$ corresponds to Z-eigenvalues.

The S-HOPM uses only tensor-vector contractions to operate with the tensor. When $A$ is given as a TT tensor, the power method can be implemented efficiently in the following way.
\begin{algorithm}[TT-S-HOPM]\label{ttshopm}
Let $(G_k)_{k=1}^d$ be the TT cores of a tensor $A$ and $x_0\in\mathbb{R}^n$ an initial guess such that $\|x_0\|=1$. For H-eigenvalues, set $m=d$. Otherwise, set $m=2$~for Z-eigenvalues. Then iterate
\begin{algorithmic}
\For{$k=1,2,\ldots$} 
\State $y_k=Ax_{k-1}^{d-1}=G_1(x_{k-1}^\top G_2^\top)\cdots(x_{k-1}^\top G_{d-1}^\top)(x_{k-1}^\top G_d^\top)$
\State $z_k=y_k^{\left[\frac{1}{m-1}\right]}$
\State $x_k=z_k/\|z_k\|$
\State $\lambda_k=Ax_k^d/\|x_k\|_m^m=(x_k^\top G_1^\top)(x_k^\top G_2^\top)\cdots(x_k^\top G_{d-1}^\top)(x_k^\top G_d^\top)/\|x_k\|_m^m$
\EndFor
\end{algorithmic}
until convergence.
\end{algorithm}

The convergence properties of S-HOPM for both H-eigenvalues and Z-eigenvalues have been studied in recent literature, and the following facts are known.
\begin{itemize}
\item For H-eigenvalues, the algorithm is known to converge to the dominant H-eigenpair $(\lambda,x)\in\mathbb{R}\times(\mathbb{R}^n\setminus\{0\})$ with $\lambda_k\to\lambda$ and $x_k\to x$ for any initial guess $x_0\in\mathbb{R}_+^n$ if $A$ belongs to the class of primitive tensors~\cite{ChaPeaZha11} (this is the case, e.g., if $A$ only has positive entries). More general well-conditioned classes of tensors have since been identified~\cite{FGH13}.
\item For Z-eigenvalues, the algorithm converges to a local maximum of~\eqref{opt} with the identity tensor $B=\mathcal{E}$ if $f(x)=Ax^d$ is a convex function for all $x\in\mathbb{R}^n$ and $d$ is even~\cite{KofReg02}. However, this ensures only the convergence of the sequence $(\lambda_k)_{k=1}^\infty$ toward a Z-eigenvalue. The sequence $(x_k)_{k=1}^\infty$ need not converge toward the corresponding Z-eigenvector.
\end{itemize}

Since the convergence of the S-HOPM method with $m=2$ is only ensured toward local maxima of~\eqref{opt}, multiple initial guesses may be necessary to find the dominant Z-eigenvalue.

\subsection{Generalized eigenproblem adaptive power method in the TT format}
Let $A$~and $B$ be $n$-dimensional, symmetric tensors of order $d$. Additionally, let $B$~be positive definite. The algorithm suggested by Kolda and Mayo~\cite{KoMa14} is based on the formulation of the $B$-eigenvalue problem~\eqref{Bevp} as an optimization problem
\begin{equation}
\max f(x)=\frac{Ax^d}{Bx^d}\|x\|^d\quad\text{subject to}\quad x\in\Sigma,\label{geapopt}
\end{equation}
where $\Sigma=\{x\in\mathbb{R}^n\mid \|x\|^d=1\}$. If $f$ is locally convex around the trial point $x\in\Sigma$, then performing the update $x\leftarrow\nabla f(x)/\|\nabla f(x)\|$ will yield ascent. Conversely if $f$ is locally concave, then the update $x\leftarrow-\nabla f(x)/\|\nabla f(x)\|$ yields descent. However, the function $f$~is generally neither convex nor concave and to remedy this situation, it is preferable to work instead with a shifted function
\[
\hat f(x)=f(x)+\alpha\|x\|^d.
\]

The GEAP algorithm proposes adaptive refinement of the shift $\alpha\in\mathbb{R}$ according to the following scheme. For local maxima of~\eqref{geapopt}, let $x\in\Sigma$ and let $\tau>0$ be the threshold for positive definiteness of the Hessian matrix $\hat H(x)=\nabla^2\hat f(x)$. If we denote by $\lambda_{\rm min}(H)$ the minimal and by $\lambda_{\rm max}(H)$ the maximal eigenvalue of the Hessian matrix $H(x)=\nabla^2f(x)$, then choosing $\alpha=\max\{0,(\tau-\lambda_{\min}(H))/d\}$ will force the shifted Hessian to be positive definite with $\lambda_{\min}(\hat H)\geq\tau$. For local minima of~\eqref{geapopt}, choosing the shift $\alpha=-\max\{0,(\tau-\lambda_{\min}(-H))/d\}$ will force the shifted Hessian matrix to be negative definite with $\lambda_{\rm max}(\hat H)\leq -\tau$. With these selections, Kolda and Mayo~\cite{KoMa14} proved via fixed-point analysis that the shifted target function $\hat f$ is locally convex (respectively, concave) in the neighborhood of the trial point $x\in\mathbb{R}^n$ and, with large enough $\tau$, convergence is guaranteed.

The Hessian matrix $H(x)=\nabla^2f(x)$ has been computed analytically in~\cite{KoMa14} for general symmetric tensors $B$ and we mention the result below for completeness:
\begin{align*}
&H(x)=2\frac{d^2Ax^d}{(Bx^d)^3}(Bx^{d-1}\otimes Bx^{d-1})+\frac{d}{Bx^d}\big((d-1)Ax^{d-2}+Ax^d(I+(d-2)(x\otimes x))\\
&+d(Ax^{d-1}\otimes x+x\otimes Ax^{d-1})\big)-\frac{d}{(Bx^d)^2}\bigg((d-1)Ax^dBx^{d-2}+d\big(Ax^{d-1}\otimes Bx^{d-1}\\
&+Bx^{d-1}\otimes Ax^{d-1}\big)+dAx^d\big(x\otimes Bx^{d-1}+Bx^{d-1}\otimes x\big)\bigg),
\end{align*}
where $I$~denotes the $n\times n$ identity matrix and the outer product is defined for vectors $a,b\in\mathbb{R}^n$ by $a\otimes b=ab^\top$. The expression for the Hessian matrix can always be computed as a linear combination of the matrices $Ax^{d-2},Bx^{d-2}\in\mathbb{R}^{n\times n}$ and outer products of the vectors $Ax^{d-1},Bx^{d-1}\in\mathbb{R}^n$, which can be computed efficiently in the TT format. For instance, if $A$ is given by the TT cores $(G_k)_{k=1}^d$, then
\[
Ax^{d-2}=G_1G_2(x^\top G_3^\top)\cdots(x^\top G_{d-1}^\top)(x^\top G_d^\top)
\]
and the respective expressions for $Ax^{d-1}$ and $Ax^d$ are similar.

By the previous discussion, we can state the GEAP algorithm in terms of TT tensors.
\begin{algorithm}[TT-GEAP]\label{ttgeap}
Let $A$ and $B$ be $n$-dimensional tensors of order $d$ characterized by the TT cores $(G_k)_{k=1}^d$ and $(\Gamma_k)_{k=1}^d$, respectively. Let  $x_0\in\mathbb{R}^n$, $\|x_0\|=1$, be an initial guess. Let $\beta=1$ if we want to find local maxima; for local minima, let $\beta=-1$. Let $\tau>0$ be the tolerance. Then the GEAP algorithm for tensor-trains can be implemented in the following way.
\begin{algorithmic}
\State $x_0\leftarrow x_0/\|x_0\|$
\For{$k=0,1,\ldots$}
\State $Ax_k^d\leftarrow(x_{k}^\top G_1^\top)(x_{k}^\top G_2^\top)\cdots(x_{k}^\top G_{d-1}^\top)(x_{k}^\top G_d^\top)$
\State $Ax_k^{d-1}\leftarrow G_1(x_{k}^\top G_2^\top)\cdots(x_{k}^\top G_{d-1}^\top)(x_{k}^\top G_d^\top)$
\State $Ax_k^{d-2}\leftarrow G_1G_2(x_{k}^\top G_3^\top)\cdots(x_{k}^\top G_{d-1}^\top)(x_{k}^\top G_d^\top)$
\State $Bx_k^d\leftarrow(x_{k}^\top \Gamma_1^\top)(x_{k}^\top \Gamma_2^\top)\cdots(x_{k}^\top \Gamma_{d-1}^\top)(x_{k}^\top \Gamma_d^\top)$
\State $Bx_k^{d-1}\leftarrow\Gamma_1(x_{k}^\top \Gamma_2^\top)\cdots(x_{k}^\top \Gamma_{d-1}^\top)(x_{k}^\top \Gamma_d^\top)$
\State $Bx_k^{d-2}\leftarrow \Gamma_1\Gamma_2(x_{k}^\top \Gamma_3^\top)\cdots(x_{k}^\top \Gamma_{d-1}^\top)(x_{k}^\top \Gamma_d^\top)$
\State $\lambda_{k}\leftarrow Ax_{k}^d/Bx_{k}^d$
\State $\alpha_k\leftarrow\beta\max\{0,(\tau-\lambda_{\rm min}(\beta H(x_{k}))/d\}$
\State $\hat x_{k+1}\leftarrow\beta(Ax_{k}^{d-1}-\lambda_kBx_{k}^{d-1}+(\alpha_k+\lambda_{k})Bx_{k}^dx_{k})$
\State $x_{k+1}=\hat x_{k+1}/\|\hat x_{k+1}\|$
\EndFor
\end{algorithmic}
\end{algorithm}
\section{Lattice-theoretic tensors}\label{meetandjointensors}
Let $(P,\preceq)$ be a nonempty poset. Let $S=\{x_1,\ldots,x_n\}$ be a finite subset of $P$ such that $x_i\preceq x_j$ only if $i\leq j$ and let $f$ be a complex-valued function on $P$. The poset $P$ is said to be locally finite if the interval
\[
\{z\in P\mid x\preceq z\preceq y\}
\]
is finite for all $x,y\in P$. If the greatest lower bound of $x,y\in P$ exists, it is called the meet of $x$ and $y$ and is denoted by $x\wedge y$, and if the least upper bound of $x,y\in P$ exists, it is called the join of $x$ and $y$ and is denoted by $x\vee y$. If $x\wedge y$ exists for all $x,y\in P$, then $(P,\preceq,\wedge)$ is called a meet semilattice, and if $x\vee y\in P$ exists for all $x,y\in P$, then $(P,\preceq,\vee)$ is called a join semilattice. If the poset $(P,\preceq,\wedge,\vee)$ is both a meet semilattice and a join semilattice, then it is called a lattice. The poset $(\mathbb{Z}_+,|)$, where $|$ is the ordinary divisibility relation, is a locally finite lattice.

Let $(P,\preceq,\wedge)$ be a meet semilattice. Then the $n$-dimensional tensor of order $d$, $(S_d)_f$, where
\[
((S_d)_f)_{i_1,\ldots,i_d}=f(x_{i_1}\wedge\cdots\wedge x_{i_d}),\quad i_1,\ldots,i_d\in\{1,\ldots,n\},
\]
is called the order $d$ meet tensor on $S$ with respect to $f$. If $(P,\preceq,\vee)$ is a join semilattice, then the $n$-dimensional tensor of order $d$, $[S_d]_f$, where
\[
([S_d]_f)_{i_1,\ldots,i_d}=f(x_{i_1}\vee\cdots\vee x_{i_d}),\quad i_1,\ldots,i_d\in\{1,\ldots,n\},
\]
is called the order $d$ join tensor on $S$ with respect to $f$.

It is known that one can induce an arbitrary commutative, associative, and idempotent meet operator $\wedge$ (respectively, join operator $\vee$) by defining an appropriate partial ordering $\preceq$~on $P$. Since the partial ordering $\preceq$ and function $f\!:P\to\mathbb{C}$~can be chosen freely, the set of lattice-theoretic tensors is easily seen to contain every symmetric $n$-dimensional order $d$ tensor $A$ that satisfies the criterion
\[
A_{i_1,i_2,\ldots,i_d}=A_{j_1,j_2,\ldots,j_d}\quad\text{if }\{i_1,i_2,\ldots,i_d\}=\{j_1,j_2,\ldots,j_d\}
\]
for all $i_1,i_2,\ldots,i_d,j_1,j_2,\ldots,j_d\in\{1,2,\ldots,n\}$.

In our numerical experiments, we consider GCD and LCM tensors as special cases of meet and join tensors. In the following, let 
$$(x,y)=\max\{z\in\mathbb{Z}_+: z|x\text{ and }z|y\}
$$ 
denote the greatest common divisor and 
$$
[x,y]=\min\{z\in\mathbb{Z}_+: x|z\text{ and }y|z\}
$$
the least common multiple of $x,y\in\mathbb{Z_+}$, respectively. Let $S=\{x_1,\ldots,x_n\}$ be a finite subset of $\mathbb{Z}_+$ and let $f$ be a complex-valued function on $\mathbb{Z}_+$. The $n$-dimensional tensor of order $d$, $(S_d)_f$, where
\[
((S_d)_f)_{i_1,\ldots,i_d}=f((x_{i_1},\ldots,x_{i_d})),\quad i_1,\ldots,i_d\in\{1,\ldots,n\},
\]
is called the order $d$ GCD tensor on $S$ with respect to $f$, and the $n$-dimensional tensor of order $d$, $[S_d]_f$, where
\[
([S_d]_f)_{i_1,\ldots,i_d}=f([x_{i_1},\ldots,x_{i_d}]),\quad i_1,\ldots,i_d\in\{1,\ldots,n\},
\]
is called the order $d$ LCM tensor on $S$ with respect to $f$. The subscript $f$ is omitted if the incidence function is the identity $I(x)=x$ on $\mathbb{Z_+}$. 

The case $d=2$ corresponding to GCD and LCM matrices has been studied extensively by many authors, starting in 1876, when Smith~\cite{Smith} calculated the determinant of the so-called Smith matrix, i.e., of the $n\times n$ matrix having the greatest common divisor of $i$ and $j$ as its $ij$ entry. After that, several related papers have been published in the literature, see~\cite{Ha2,Is2,Sa}. Beslin and Ligh~\cite{BeLi} showed that GCD matrices are positive definite, and Ovall~\cite{Ov} considered positive definiteness of GCD and related matrices. Balatoni~\cite{Fe} estimated the smallest and the largest eigenvalue of the Smith matrix.  Hong and Loewy~\cite{AR} examined the asymptotic behavior of eigenvalues of power GCD matrices. Ilmonen, Haukkanen, and Merikoski~\cite{i1} examined the eigenvalues of meet and join matrices with respect to $f.$  Recently Mattila and Haukkanen~\cite{Mattila4} studied positive definiteness and eigenvalues of meet and join matrices, and Mattila, Haukkanen, and M\"{a}ntysalo~\cite{Mattila5} considered singularity of LCM-type matrices.

The study of tensors actually started even earlier than that of GCD matrices, when Cayley~\cite{Cayley} considered hyperdeterminants in 1845. The focus of the lattice-theoretic community is currently shifting to tensors as well: Haukkanen~\cite{Ha3} considered the (Cayley) hyperdeterminants of GCD tensors and Luque~\cite{Luque} studied (Cayley) hyperdeterminants of meet tensors. Ilmonen~\cite{ilhyp} considered eigenvalues of meet tensors. In the following, we give the fundamental tensor eigenvalue bound of~\cite{Qi} adapted to meet tensors.

\begin{theorem}[cf.~\cite{Qi}]\label{meetupperbound}
Let $(P,\preceq,\wedge)$ be a meet semilattice and $S=\{x_1,\ldots,x_n\}$ with $x_i\preceq x_j$ only if $i\leq j$, be a finite subset of $P$. Let $f$ be any real-valued function on $P$. Then every eigenvalue of the $n$-dimensional order $d$~meet tensor $(S_d)_f$ lies in the region
\[
\bigcup_{k=1}^n\{z\in\mathbb{C}: |z-f(x_k)|\leq \sum_{\substack{i_2,\ldots,i_d=1\\ \delta_{k,i_2,\ldots,i_d}=0}}^n|f(x_{i_k}\wedge x_{i_2}\wedge\cdots\wedge x_{i_d})|\},
\]
where $\delta$ represents the Kronecker tensor.
\end{theorem}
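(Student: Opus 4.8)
The plan is to recognize this statement as a Gershgorin-type localization theorem for the (complex) H-eigenvalues of the meet tensor and to prove it by the classical dominant-component argument. Let $(\lambda,x)$ be an eigenpair, i.e., a solution of the H-eigenvalue equation $(S_d)_f\,x^{d-1}=\lambda\,x^{[d-1]}$ with $x\in\mathbb{C}^n\setminus\{0\}$. First I would fix an index $k$ at which $|x_k|=\max_{1\leq i\leq n}|x_i|$; since $x\neq 0$ we have $|x_k|>0$, which is exactly what makes the division at the end legitimate. Writing out the $k$-th component of the eigenequation gives
\[
\lambda x_k^{d-1}=\sum_{i_2,\ldots,i_d=1}^n f(x_k\wedge x_{i_2}\wedge\cdots\wedge x_{i_d})\,x_{i_2}\cdots x_{i_d}.
\]

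The second step is to split off the purely diagonal term, corresponding to $i_2=\cdots=i_d=k$, which is precisely the one excluded by the condition $\delta_{k,i_2,\ldots,i_d}=0$. Here the key structural fact is idempotency of the meet operation: since $x_k\wedge x_k\wedge\cdots\wedge x_k=x_k$, the diagonal entry equals $f(x_k)$, so that term contributes $f(x_k)\,x_k^{d-1}$. Subtracting it yields
\[
(\lambda-f(x_k))\,x_k^{d-1}=\sum_{\substack{i_2,\ldots,i_d=1\\ \delta_{k,i_2,\ldots,i_d}=0}}^n f(x_k\wedge x_{i_2}\wedge\cdots\wedge x_{i_d})\,x_{i_2}\cdots x_{i_d}.
\]

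Finally I would take absolute values and use the maximality of $|x_k|$, namely $|x_{i_2}\cdots x_{i_d}|\leq|x_k|^{d-1}$ for every admissible multi-index, to bound the right-hand side by $\big(\sum|f(x_k\wedge x_{i_2}\wedge\cdots\wedge x_{i_d})|\big)\,|x_k|^{d-1}$, the sum being taken over the same restricted index set. Dividing through by $|x_k|^{d-1}>0$ gives
\[
|\lambda-f(x_k)|\leq\sum_{\substack{i_2,\ldots,i_d=1\\ \delta_{k,i_2,\ldots,i_d}=0}}^n|f(x_k\wedge x_{i_2}\wedge\cdots\wedge x_{i_d})|,
\]
so $\lambda$ lies in the $k$-th disc and hence in the stated union. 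The argument is essentially the tensor Gershgorin theorem of Qi specialized to the meet structure, and I expect no serious obstacle: the only points requiring care are the identification of the diagonal entry with $f(x_k)$ via idempotency of $\wedge$, and the observation that the dominant component satisfies $|x_k|>0$ so that the final division is valid.
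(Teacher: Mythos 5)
Your proof is correct and is exactly the argument the paper relies on: the theorem is stated with ``cf.~\cite{Qi}'' and no in-text proof, and Qi's proof is precisely this Gershgorin-type dominant-component argument (pick $k$ with $|x_k|$ maximal, split off the diagonal term, bound and divide by $|x_k|^{d-1}>0$). You also correctly supply the one ingredient specific to the meet-tensor adaptation, namely that idempotency of $\wedge$ gives the diagonal entry $f(x_k\wedge\cdots\wedge x_k)=f(x_k)$, which incidentally fixes the paper's typographical $x_{i_k}$ in the stated bound.
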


In this paper, we consider GCD tensors as a special case of meet tensors since the largest inclusion region of Theorem~\ref{meetupperbound} turns out to be extremely sharp for these tensors when $d\gg 2$. We also note that Theorem~\ref{meetupperbound} can be used to bound Z-eigenvalues.

We illustrate Theorem~\ref{meetupperbound} by considering as an example the full spectrum of eigenvalues of the so-called Smith tensor $A$ of dimension $n=4$ and orders $d=3,4$ defined by
\[
A_{i_1,\ldots,i_d}=(i_1,\ldots,i_d),\quad i_1,\ldots,i_d\in\{1,\ldots,n\}.
\]
Because of the small dimensionality in this example, the whole spectrum of eigenvalues can be solved by using Mathematica 10's \texttt{NSolve} function. The eigenvalues are displayed in Figures~\ref{Hsystem} and~\ref{Hsystem2}. Recall that real-valued eigenvalues are called H-eigenvalues.

\begin{figure}[!t]
\centering
\subfloat{{\includegraphics[width=6.2cm]{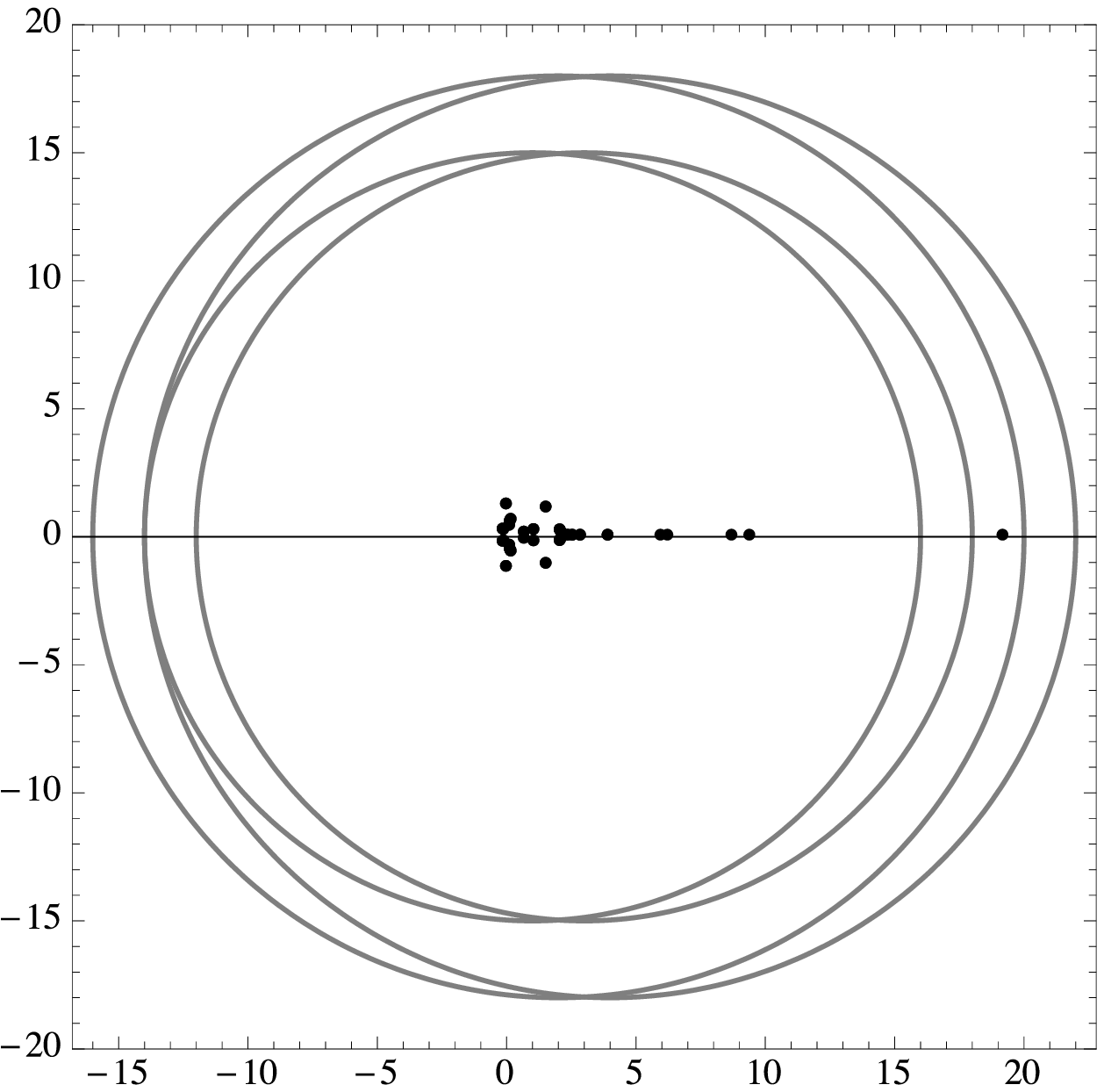}}\subfloat{{\includegraphics[width=6.2cm]{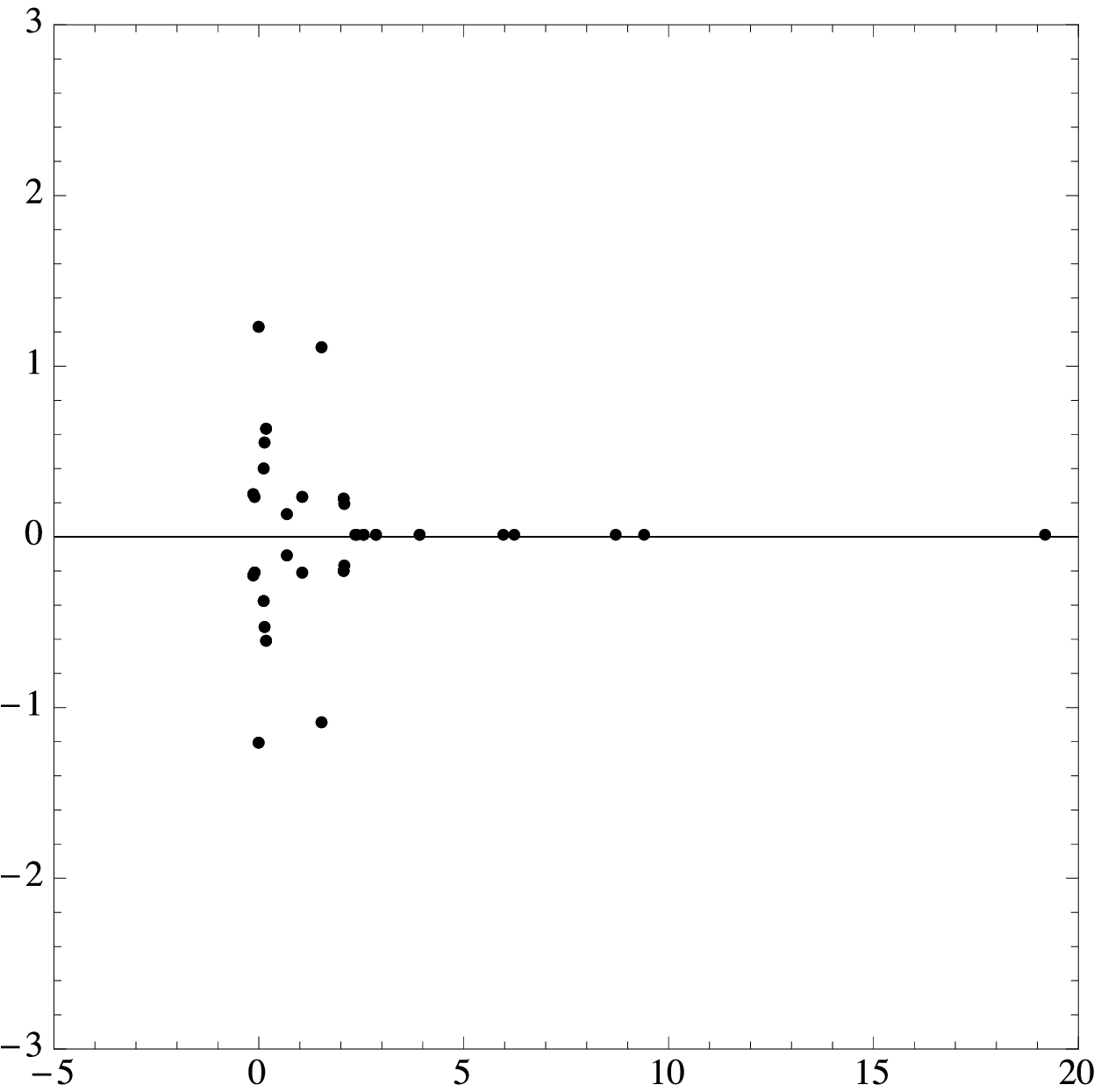}}}}
\caption{All eigenvalues of the Smith tensor of dimension 4 and order 3 in the complex plane. The contours denote the constituent disks of Theorem~\ref{meetupperbound}.}\label{Hsystem}
\subfloat{{\includegraphics[width=6.2cm]{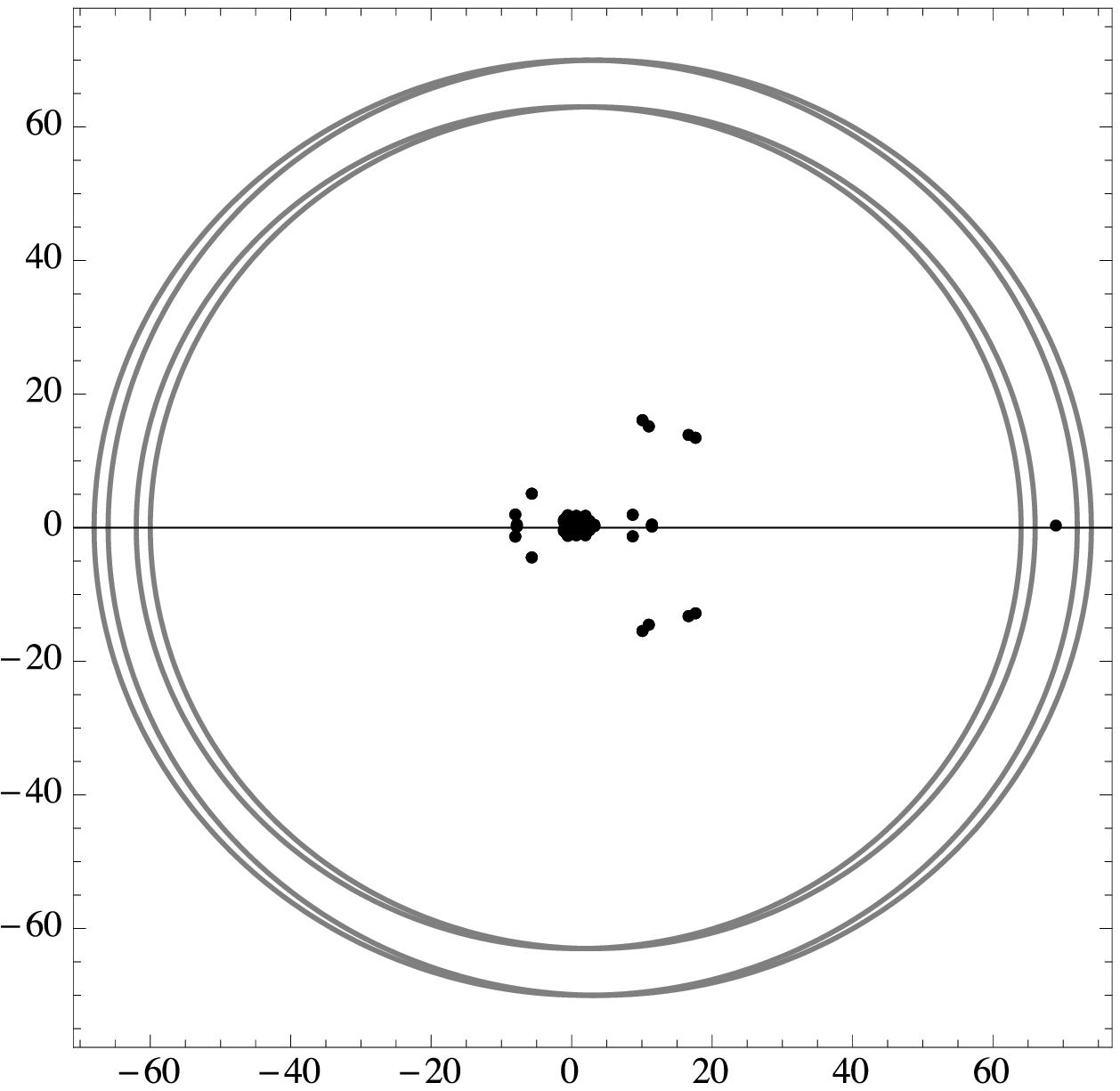}}\subfloat{{\includegraphics[width=6.2cm]{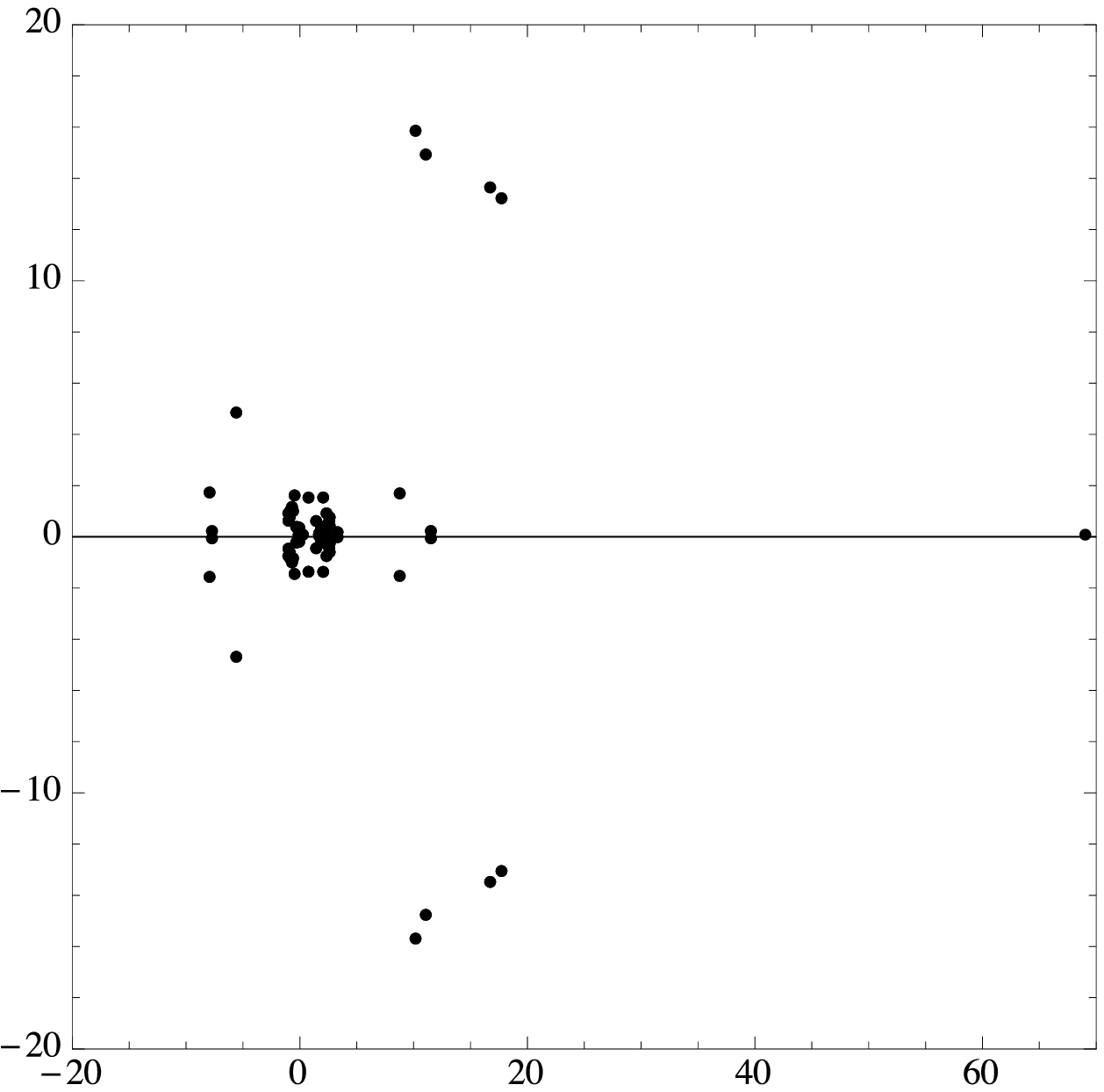}}}}
\caption{All eigenvalues of the Smith tensor of dimension 4 and order 4 in the complex plane. The contours denote the constituent disks of Theorem~\ref{meetupperbound}.}\label{Hsystem2}
\end{figure}

The parity of the order $d$ affects the distribution of non-dominant H-eigenvalues: for even $d$, the non-dominant H-eigenvalues cluster near the origin whereas for odd $d$, the non-dominant H-eigenvalues tend to cluster away from the origin. This phenomenon appears to become more prominent with increasing dimensionality: in the numerical experiments of Section~\ref{numex}, we observe that the minimal H-eigenvalues of even-ordered Smith tensors tend to zero at an exponential rate. Additional numerical studies conducted by the authors suggest that the H-eigenvalues of odd-ordered Smith tensors cluster strongly near integer values for high-dimensional problems, even becoming indistinguishable from the respective integer up to numerical precision. We hope to return to this problem once the theoretical properties of odd-ordered tensors are more developed.

\subsection{Explicit TT decomposition of meet tensors}
\label{meetTT}
Let $(P,\preceq,\wedge,\hat 0)$ be a locally finite meet semilattice with the smallest element $\hat 0\in P$ such that $\hat 0\preceq x$ for all $x\in P$. Suppose that $S=\{x_1,\ldots,x_n\}$ is a finite, meet closed subset of $P$ ordered so that $x_i\preceq x_j$ only if $i\leq j$, and that $f$ is a complex-valued function on $P$. It has been shown in~\cite{ilhyp} that the meet tensor $(S_d)_f$ has an explicit canonical decomposition given by
\begin{equation}
((S_d)_f)_{i_1,\ldots,i_d}=\sum_{k=1}^nD_k E_{i_1,k}\cdots E_{i_d,k},\quad i_1,\ldots,i_d\in\{1,\ldots,n\},\label{candecomp}
\end{equation}
where the coefficients are given by
\[
D_k=\sum_{\substack{z\preceq x_k\\Êz\not\preceq x_j,\ j<k}}\sum_{\hat 0\preceq y\preceq z}f(y)\mu_P(y,z),\quad k\in\{1,\ldots,n\},
\]
the $n\times n$ matrix $E$ is defined elementwise by
\[
E_{i,j}=\begin{cases}1&\text{if }x_j\preceq x_i,\\ 0&\text{otherwise,}\end{cases}\quad i,j\in\{1,\ldots,n\},
\]
and $\mu_P$ is the M\"{o}bius function of $P$, which can be computed inductively by
\begin{align*}
\mu_P(x,y)=\begin{cases}1&\text{if }x=y,\\ -\sum_{x\preceq z\prec y}\mu_P(x,z)&\text{if }x\prec y,\\ 0&\text{otherwise.}\end{cases}
\end{align*}
The canonical decomposition can be embedded into tensor-train formalism as follows.
\begin{proposition}\label{meettheorem}
Let $(P,\preceq,\wedge,\hat 0)$ be a meet semilattice, $S=\{x_1,\ldots,x_n\}$ a finite, meet closed subset of $P$ ordered $x_i\preceq x_j$ only if $i\leq j$, and $f$ a complex-valued function on $P$. Then the $n$-dimensional meet tensor $(S_d)_f$ of order $d$ has a rank-$n$ tensor-train decomposition
\[
((S_d)_f)_{i_1,\ldots,i_d}=G_1(i_1)\left(\prod_{k=2}^{d-1}G(i_k)\right)G_d(i_d),
\]
where the TT cores $G_1$, $G$, and $G_d$ are defined by setting
\begin{align}
G_1(i)_{1,j}=D_jE_{i,j},\quad G(i)_{j,k}=\delta_{j,k}E_{i,k},\quad\text{and}\quad G_d(i)_{j,1}=E_{i,j}.\label{meetttdecomp}
\end{align}
for all $i,j,k\in\{1,\ldots,n\}$. The number of nonzero elements in each core is equal to
\[
\sum_{z\in S}\#\{y\in S: y\preceq z\}.
\]
\end{proposition}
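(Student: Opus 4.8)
The plan is to prove Proposition~\ref{meettheorem} by direct verification, embedding the already-established canonical decomposition \eqref{candecomp} into the tensor-train format rather than deriving anything from scratch. The key structural observation I would isolate first is that the middle cores defined in \eqref{meetttdecomp} are diagonal matrices: since $G(i)_{j,k}=\delta_{j,k}E_{i,k}$ vanishes whenever $j\neq k$, each core $G(i_k)$ is the $n\times n$ diagonal matrix whose $j$th diagonal entry equals $E_{i_k,j}$. The row core $G_1(i_1)$ is a $1\times n$ vector with $j$th entry $D_jE_{i_1,j}$, and the column core $G_d(i_d)$ is an $n\times 1$ vector with $j$th entry $E_{i_d,j}$, so the inner dimensions match and the asserted product is well defined.

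First I would compute the matrix product appearing in the statement. Because a product of diagonal matrices is again diagonal, the factor $\prod_{k=2}^{d-1}G(i_k)$ is the diagonal matrix whose $j$th diagonal entry is $\prod_{k=2}^{d-1}E_{i_k,j}$. Sandwiching this diagonal matrix between the row vector $G_1(i_1)$ and the column vector $G_d(i_d)$ collapses the multiplication to a single sum over the shared index $j$, giving
\[
G_1(i_1)\Big(\prod_{k=2}^{d-1}G(i_k)\Big)G_d(i_d)=\sum_{j=1}^nD_jE_{i_1,j}\Big(\prod_{k=2}^{d-1}E_{i_k,j}\Big)E_{i_d,j}=\sum_{j=1}^nD_j\prod_{\ell=1}^dE_{i_\ell,j}.
\]
This is exactly \eqref{candecomp} after relabeling the summation index, which establishes the decomposition. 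Since every core has inner dimension $n$, the compression ranks of this representation are all equal to $n$, so it is a rank-$n$ tensor-train decomposition as claimed; no minimality argument is needed because the proposition only exhibits a decomposition of this rank.

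It then remains to count the nonzero entries of each core, and here I would argue that the sparsity pattern of all three cores is governed by the $0/1$ matrix $E$. The entries of $G_d$ are literally $E_{i,j}$; the nonzero entries of the middle core $G$ occur precisely at the diagonal positions $j=k$ with $E_{i,k}=1$; and the nonzero entries of $G_1$ occur where $E_{i,j}=1$ (carrying the extra factor $D_j$). In each case the count reduces to the number of nonzero entries of $E$, namely
\[
\#\{(i,j):x_j\preceq x_i\}=\sum_{z\in S}\#\{y\in S:y\preceq z\},
\]
obtained by substituting $z=x_i$ and $y=x_j$ into the definition of $E$.

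The computation is essentially mechanical once the diagonal structure of the middle cores is recognized, so the only point genuinely requiring care is the nonzero count for $G_1$, whose entries are $D_jE_{i,j}$ rather than $E_{i,j}$. Because $x_j\preceq x_j$ forces $E_{j,j}=1$, every column of $E$ is nonempty, and hence the structural sparsity pattern of $G_1$ coincides with that of $E$; the numerical count agrees with the stated value exactly when $D_j\neq 0$ for all $j$, which I would note holds generically and otherwise furnishes an upper bound. The main conceptual step is thus the reduction of the matrix product to the diagonal sum, after which the identification with \eqref{candecomp} and the nonzero count both follow immediately.
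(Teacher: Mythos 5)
Your proof is correct and follows essentially the same route as the paper, which presents Proposition~\ref{meettheorem} as an immediate embedding of the canonical decomposition~\eqref{candecomp} into TT format --- your diagonal-core computation collapsing the matrix product to $\sum_{j=1}^nD_j\prod_{\ell=1}^dE_{i_\ell,j}$ is precisely that verification, spelled out. Your closing caveat that the stated nonzero count for $G_1$ presumes $D_j\neq 0$ (and is otherwise an upper bound on the numerical nonzeros, though exact for the structural sparsity pattern) is a fair point of care that the paper glosses over.
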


\emph{Remark.}  We use the TT decomposition --- instead of the canonical decomposition --- in this paper since it conforms to the more general framework covered by the eigenvalue algorithms discussed in Section~\ref{TTalgorithms}.

In the special case of GCD tensors in the lattice $(\mathbb{Z_+},|)$, the M\"{o}bius function $\mu_P$ is characterized by the arithmetic M\"{o}bius function $\mu_P(x,y)=\mu(y/x)$ for all $x|y$, where
\[
\mu(x)=\begin{cases}1&\text{if }x=1,\\ (-1)^n&\text{if $x$ is the product of $n$ distinct prime numbers},\\ 0&\text{otherwise.}\end{cases}
\]
For the Smith tensor, we obtain the following as a corollary.
\begin{corollary}
Let $S=\{1,\ldots,n\}$ be a subset of $\mathbb{Z}_+$. Then the number of nonzero elements in each one of the TT cores $G_1$, $G$, and $G_d$ corresponding to $(S_d)$ increases as
\[
{\rm nz}(n)=n\ln n+(2\gamma-1)n+\mathcal{O}(\sqrt{n})\quad\text{as }n\to\infty,
\]
where $\gamma$ denotes the Euler--Mascheroni constant.
\end{corollary}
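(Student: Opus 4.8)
The plan is to reduce the corollary to a classical statement in analytic number theory and then invoke the appropriate asymptotic. First I would invoke Proposition~\ref{meettheorem}, which asserts that the number of nonzero elements in each TT core equals $\sum_{z\in S}\#\{y\in S:y\preceq z\}$, a quantity that is manifestly independent of the tensor order $d$ (consistent with the order-independence of the decomposition). Specializing to the Smith tensor means taking the lattice $(\mathbb{Z}_+,\mid)$ and $S=\{1,\ldots,n\}$, so that the relation $y\preceq z$ becomes divisibility $y\mid z$. The key elementary observation is that every divisor $y$ of an element $z\le n$ automatically satisfies $y\le z\le n$, hence lies in $S$; consequently $\#\{y\in S:y\mid z\}=\tau(z)$, where $\tau(z)$ denotes the number of positive divisors of $z$. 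This identifies the count as the divisor summatory function
\[
{\rm nz}(n)=\sum_{z=1}^n\tau(z)=\sum_{z=1}^n\#\{y:y\mid z\}.
\]

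Next I would rewrite this sum by exchanging the order of summation, counting each pair $(y,z)$ with $y\mid z$ and $z\le n$ by how many multiples of $y$ lie in $\{1,\ldots,n\}$, which yields $\sum_{z=1}^n\tau(z)=\sum_{y=1}^n\lfloor n/y\rfloor$. At this point the naive estimate $\lfloor n/y\rfloor=n/y+\mathcal{O}(1)$ together with the harmonic asymptotic $\sum_{y=1}^n 1/y=\ln n+\gamma+\mathcal{O}(1/n)$ already recovers the leading term $n\ln n$, but it only produces an error of order $\mathcal{O}(n)$ and therefore cannot isolate the second-order coefficient $2\gamma-1$. To sharpen this I would apply the Dirichlet hyperbola method: counting lattice points $(u,v)$ with $uv\le n$ and splitting the range symmetrically about $\sqrt{n}$ gives
\[
\sum_{z=1}^n\tau(z)=2\sum_{u\le\sqrt{n}}\Big\lfloor\frac{n}{u}\Big\rfloor-\big\lfloor\sqrt{n}\big\rfloor^2.
\]
Inserting the partial-sum asymptotics $\sum_{u\le\sqrt{n}}1/u=\tfrac12\ln n+\gamma+\mathcal{O}(1/\sqrt{n})$ and bounding the fractional-part contributions by $\mathcal{O}(\sqrt{n})$ then produces exactly
\[
{\rm nz}(n)=n\ln n+(2\gamma-1)\,n+\mathcal{O}(\sqrt{n}),
\]
which is the claimed formula; equivalently, one may simply cite Dirichlet's classical resolution of the divisor problem.

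I do not expect a genuine obstacle here, since the corollary is essentially a repackaging of a well-known result: the only nontrivial ingredient is the $\mathcal{O}(\sqrt{n})$ error term of the Dirichlet divisor problem, which is standard. The one point demanding care is the reduction step itself, namely verifying that no divisors are lost at the boundary (that $\{y\in S:y\mid z\}$ really equals the full divisor set of $z$ for every $z\le n$); this hinges precisely on $S$ being the initial segment $\{1,\ldots,n\}$ and would fail for a general meet-closed set $S\subset\mathbb{Z}_+$, in which case one would obtain $\sum_{z\in S}\#\{y\in S:y\mid z\}$ rather than the clean divisor sum. With that verification in place the rest is a routine invocation of the hyperbola method.
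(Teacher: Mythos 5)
Your proposal is correct and follows essentially the same route as the paper: both reduce ${\rm nz}(n)$ via Proposition~\ref{meettheorem} to the divisor summatory function $\sum_{z=1}^n\sigma_0(z)$ and invoke its classical asymptotic $n\ln n+(2\gamma-1)n+\mathcal{O}(\sqrt{n})$. The only difference is presentational --- the paper simply cites this average-order result (Hardy--Wright, Theorem~320), whereas you additionally sketch its proof by the Dirichlet hyperbola method and explicitly verify the boundary point that $\{y\in S: y\mid z\}$ is the full divisor set of $z$, which the paper leaves implicit.
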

\begin{proof}
The claim follows from the average order of the number-of-divisors function $\sigma_0(n)=\sum_{k|n}1$, which is known to be (see~\cite[Theorem~320]{HW}) 
\[
\frac{1}{n}\sum_{k=1}^n\sigma_0(k)=\ln n+(2\gamma-1)+\mathcal{O}(n^{-1/2}).
\]
This yields
\[
{\rm nz}(n)=\sum_{k=1}^n\sigma_0(k)=n\ln n+(2\gamma-1)n+\mathcal{O}(\sqrt{n})\quad\text{as }n\to\infty
\]
proving the assertion.  \quad
\end{proof}

We have demonstrated that meet tensors defined on meet closed sets can be stored in tensor-train format with sparse TT cores and, in the special case of the Smith tensor, requires only the order of $n\ln n+(2\gamma-1)n+\mathcal{O}(\sqrt{n})$ parameters for the exact representation of the full tensor. Notably, the TT factors are sparse while the full Smith tensor is dense. The number of parameters of this representation is independent of the tensor order $d$, which eliminates the so-called curse of dimensionality associated with storing the full array of $n^d$ values. The actual storage requirements for storing the TT cores in Mathematica 10's \texttt{SparseArray} format are displayed in Table~\ref{meetstorage}.
\begin{table}[!h]
\begin{center}
\begin{tabular}{c|c|c}
Dimension $n$&Bit count of the triplet $(G_1,G,G_d)$&Number of nonzero elements\\
\hline $10^1$&$4.80$\,kb&$3\cdot 27$\\
$10^2$&$54.73$\,kb&$3\cdot 482$\\
$10^3$&$747.56$\,kb&$3\cdot 7\,069$\\
$10^4$&$9.52$\,Mb&$3\cdot 93\,668$\\
$10^5$&$118.01$\,Mb&$3\cdot 1\,166\,750$\\
$10^6$&$1.38$\,Gb&$3\cdot 13\,970\,034$
\end{tabular}
\caption{The storage requirements of the order independent triplets in Mathematica 10's \texttt{SparseArray} format.}\label{meetstorage}
\end{center}
\end{table}
\subsection{Implicit TT decomposition of join tensors}
\label{joinTT}
Let $(P,\preceq,\vee)$ be a join semilattice, $S=\{x_1,\ldots,x_n\}$ a finite subset of $P$ ordered $x_i\preceq x_j$ only if $i\leq j$, and let $f$ be a complex-valued function on $P$. By~\cite{Comon08}, it is known that a symmetric canonical decomposition exists for any symmetric tensor over the scalar field $\mathbb{C}$ with its canonical rank bounded from above by $\binom{d+n-1}{d}$ and, in consequence, a TT decomposition exists implicitly for the join tensor $[S_d]_f$ defined elementwise by
\[
([S_d]_f)_{i_1,\ldots,i_d}=f(x_{i_1}\vee\cdots\vee x_{i_d}),\quad i_1,\ldots,i_d\in\{1,\ldots,n\}.
\]
However, the derivation of an explicit factorization as in the case of meet tensors would require new mathematical theory beyond the scope of this paper. Since the individual elements of the tensor $[S_d]_f$ can be determined by an evaluation of the function $(x_1,\ldots,x_d)\mapsto f([x_1,\ldots,x_d])$, the TT-DMRG cross algorithm can be used instead to compute a TT decomposition numerically. Even though this approach does not preserve symmetry, the approach taken here is justified by its superior computational simplicity.

Whenever one is using the TT-DMRG cross algorithm, there is a chance that the curse of dimensionality is invoked if the compression ranks associated with the TT decomposition of join tensors are prohibitively high. To this end, we prove in a special case of LCM tensors that a representative TT decomposition exists with a low a priori known maximal TT rank, thus bounding the total number of parameters needed for representation in the TT format.
\begin{theorem}\label{lcmranks}
The maximal TT rank of the $n$-dimensional order $d$ LCM tensor $A=[S_d]$ with $S=\{1,\ldots,n\}$ is given by
\[
{\rm rank}_{\rm TT}A=\begin{cases}
n&\text{if }d=3,\\
\#\{i_1\cdots i_{\lfloor d/2\rfloor }\mid 1\leq i_j\leq n\text{ and }(i_j,i_k)=1\text{ for all }j\neq k\}&\text{if }d>3.\end{cases}
\]
\end{theorem}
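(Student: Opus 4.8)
The plan is to compute each unfolding rank $\operatorname{rank}A_{[k]}$ separately and then maximize over $k$, exploiting throughout that the entries of $A=[S_d]$ are $A_{i_1,\dots,i_d}=[i_1,\dots,i_d]$ (least common multiple). The starting point is the associativity $[i_1,\dots,i_d]=[\,[i_1,\dots,i_k],[i_{k+1},\dots,i_d]\,]$, which shows that each entry of $A_{[k]}$ depends on the row multi-index only through $a=[i_1,\dots,i_k]$ and on the column multi-index only through $b=[i_{k+1},\dots,i_d]$. Rows (resp.\ columns) sharing the same value of $a$ (resp.\ $b$) therefore coincide, so $\operatorname{rank}A_{[k]}$ equals the rank of the reduced matrix $M^{(k)}=\big([a,b]\big)_{a\in L_k,\,b\in L_{d-k}}$, where $L_j=\{[i_1,\dots,i_j]:1\le i_\ell\le n\}$ is the set of attainable least common multiples of $j$-tuples. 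This gives the trivial bound $\operatorname{rank}A_{[k]}\le\min(|L_k|,|L_{d-k}|)$; and since padding a tuple with $1$'s shows $L_j\subseteq L_{j+1}$, the sequence $|L_j|$ is nondecreasing and $L_k\subseteq L_{d-k}$ whenever $k\le d-k$.

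The combinatorial heart of the argument is that $L_k$ is \emph{divisor closed} and coincides with the set $P_k=\{g_1\cdots g_k:1\le g_\ell\le n,\ (g_\ell,g_m)=1\text{ for }\ell\neq m\}$ of products of $k$ pairwise coprime numbers bounded by $n$. I would prove $L_k=P_k$ by a prime-power packing argument: given $a=[i_1,\dots,i_k]$, assign each prime power $p^{v_p(a)}$ to a bin $j(p)$ realizing the maximum $v_p(a)=\max_\ell v_p(i_\ell)$; the product $g_j$ of the prime powers placed in bin $j$ then divides $i_j$, so $g_j\le n$, the $g_j$ are pairwise coprime, and $a=g_1\cdots g_k$, whence $a\in P_k$. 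The reverse inclusion $P_k\subseteq L_k$ is immediate because pairwise coprimality turns the product into an lcm. Divisor closedness follows at once: any $e\mid a=g_1\cdots g_k$ factors as $e=\prod_\ell e_\ell$ with $e_\ell\mid g_\ell$ pairwise coprime, so $e\in P_k=L_k$. This step simultaneously identifies $|L_k|$ with the pairwise-coprime product count appearing in the statement.

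Next I would show that the square least common multiple matrix on the divisor closed set $L_k$ is nonsingular, upgrading the trivial upper bound to equality. Writing $w=\mu*h$ for the Dirichlet convolution with $h(g)=1/g$, so that $\sum_{e\mid g}w(e)=1/g$, and using $[a,b]=ab/(a,b)$, one obtains the rank-revealing factorization $\big([a,b]\big)_{a,b\in L_k}=D\,Z\,W\,Z^{\top}D$, where $D=\operatorname{diag}(a)_{a\in L_k}$, the matrix $Z=(Z_{a,e})_{a,e\in L_k}$ has $Z_{a,e}=1$ when $e\mid a$ and $0$ otherwise, and $W=\operatorname{diag}(w(e))_{e\in L_k}$; divisor closedness guarantees that every $e$ occurring in the sum already lies in $L_k$, so all factors are indexed by $L_k$. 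Ordering $L_k$ by magnitude makes $Z$ lower unitriangular, while $w$ is multiplicative with $w(p^{j})=p^{-j}(1-p)\neq0$, so that
\[
\det\big([a,b]\big)_{a,b\in L_k}=\Big(\textstyle\prod_{a\in L_k}a\Big)^{2}\prod_{e\in L_k}w(e)\neq0.
\]
For $k\le d-k$ the columns $b\in L_k\subseteq L_{d-k}$ exhibit this nonsingular $|L_k|\times|L_k|$ block inside $M^{(k)}$ (the case $k>d-k$ is symmetric, restricting rows instead), forcing $\operatorname{rank}A_{[k]}=|L_{\min(k,d-k)}|$.

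Finally I would maximize over $k$. Since $|L_j|$ is nondecreasing and $\min(k,d-k)$ attains its maximum $\lfloor d/2\rfloor$ at $k=\lfloor d/2\rfloor$, it follows that $\operatorname{rank}_{\mathrm{TT}}A=\max_k|L_{\min(k,d-k)}|=|L_{\lfloor d/2\rfloor}|=|P_{\lfloor d/2\rfloor}|$, which is exactly the stated count; the case $d=3$ is merely $\lfloor d/2\rfloor=1$, where $P_1=\{1,\dots,n\}$ and the count reduces to $n$. I expect the main obstacle to be the nonsingularity step rather than the bookkeeping, since the LCM matrix on a merely gcd-closed set can already be singular, so it is essential to use the stronger divisor closedness of $L_k$ together with the nonvanishing of the multiplicative weight $w$ to push the factorization through. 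Consequently the packing lemma establishing divisor closedness is the true linchpin of the whole argument.
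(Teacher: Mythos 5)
Your proposal is correct and takes essentially the same route as the paper: you reduce each unfolding $A_{[k]}$ to the LCM matrix indexed by the set of attainable lcm values, identify that set with the pairwise-coprime products via the same prime-power packing argument as Lemma~\ref{lcmsetlemma}, and obtain the lower bound from a nonsingular square LCM submatrix on the factor-closed set $L_{\lfloor d/2\rfloor}$, maximizing over $k$ exactly as the paper does using the nondecreasingness of $\#L_j$ and the transpose symmetry $A_{[k]}=A_{[d-k]}^{\top}$. The only deviations are gains in self-containment: where the paper cites Smith's 1876 result for invertibility of LCM matrices on factor-closed sets and asserts the factor-closedness of $L_k$ without proof, you supply both (the $DZWZ^{\top}D$ factorization with the multiplicative weight $w=\mu*h$, $w(p^j)=p^{-j}(1-p)\neq 0$, and the divisor-splitting argument), and you subsume the paper's separately handled $d=3$ case into the general formula via $\lfloor 3/2\rfloor=1$.
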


Before we prove Theorem~\ref{lcmranks}, we establish an auxiliary result linking certain LCM sets to multiplication tables of pairwise coprime numbers.
\begin{lemma}\label{lcmsetlemma}
Let $k\geq 2$. Then
\[
\{[i_1,\ldots,i_k]\mid 1\leq i_j\leq n\}=\{i_1\cdots i_k\mid 1\leq i_j\leq n\text{ and }(i_j,i_s)=1\text{ for all }j\neq s\}.
\]
\end{lemma}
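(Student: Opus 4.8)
The plan is to prove the set equality by establishing both inclusions, with the reverse inclusion being immediate and the forward inclusion carrying the real content.

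For the inclusion $\supseteq$, I would suppose $i_1,\ldots,i_k\in\{1,\ldots,n\}$ are pairwise coprime. Since the least common multiple of pairwise coprime integers coincides with their product, we have $[i_1,\ldots,i_k]=i_1\cdots i_k$, which exhibits every element of the right-hand set as an element of the left-hand set. This direction needs only the elementary fact that $(a,b)=1$ implies $[a,b]=ab$, applied inductively over the $k$ entries.

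For the inclusion $\subseteq$, I would fix an arbitrary tuple $i_1,\ldots,i_k\in\{1,\ldots,n\}$ and set $m=[i_1,\ldots,i_k]$. The idea is to exhibit pairwise coprime factors of $m$, each lying in $\{1,\ldots,n\}$, whose product equals $m$. Writing $v_p$ for the $p$-adic valuation, the standard formula for the least common multiple gives $m=\prod_{p\mid m}p^{a_p}$ with $a_p=\max_{1\le s\le k}v_p(i_s)$. For each prime $p\mid m$ I would select an index $\sigma(p)\in\{1,\ldots,k\}$ realizing this maximum, so that $p^{a_p}\mid i_{\sigma(p)}$, and then group the maximal prime powers according to their assigned index by setting
\[
j_s=\prod_{\substack{p\mid m\\ \sigma(p)=s}}p^{a_p},\qquad s=1,\ldots,k,
\]
with the convention that the empty product equals $1$. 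Each $j_s$ is a product of powers of distinct primes, each occurring to exactly its exponent in $i_s$ (because $\sigma(p)=s$ forces $a_p=v_p(i_s)$), so $j_s\mid i_s$ and hence $1\le j_s\le i_s\le n$. Distinct indices are assigned disjoint sets of primes, so the $j_s$ are pairwise coprime, and since each prime $p\mid m$ contributes the full factor $p^{a_p}$ to exactly one $j_s$, we obtain $j_1\cdots j_k=\prod_{p\mid m}p^{a_p}=m$, placing $m$ in the right-hand set.

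The only delicate point — and the step I expect to require the most care in the writing — is the observation that grouping the maximal prime powers by an index achieving the maximal valuation yields factors that each divide some $i_s$ and are therefore automatically bounded by $n$. I do not anticipate a genuine obstacle beyond this bookkeeping: the inequality $p^{a_p}\le i_{\sigma(p)}\le n$ is precisely what keeps the constructed factors $j_s$ within the admissible range $\{1,\ldots,n\}$, while pairwise coprimality and the product identity follow directly from the fact that each prime is assigned to a single group.
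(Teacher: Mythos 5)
Your proposal is correct and follows essentially the same route as the paper: both directions match, and your forward inclusion is exactly the paper's argument of factoring the LCM via maximal $p$-adic valuations and grouping each maximal prime power under one index attaining the maximum (the paper fixes the smallest such index as a tie-break where you use an arbitrary choice function $\sigma$, a purely cosmetic difference). The key points --- each factor divides the corresponding $i_s$ and is hence at most $n$, the factors are pairwise coprime by disjointness of the assigned primes, and their product recovers the LCM --- are identical in both write-ups.
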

\begin{proof}
We begin by showing that the latter set is contained in the former. In the case $k=2$, it follows from the well-known identity $[i_1,i_2]=i_1i_2/(i_1,i_2)$ that $[i_1,i_2]=i_1i_2$, if and only, if $(i_1,i_2)=1$. It is a simple exercise to show by induction with respect to $k$ that $[i_1,\ldots,i_k]=i_1\cdots i_k$, if and only, if $(i_j,i_s)=1$~for all $j\neq s$. This resolves the inclusion of the latter set into the former.

To show the converse, we consider the prime number decomposition of the LCM. Let
\[
i_j=\prod_{p\text{ prime}}p^{i_j^{(p)}}
\]
for some $i_j^{(p)}\geq 0$, $j\in\{1,\ldots,n\}$. Now consider the LCM
\begin{equation}
[i_1,\ldots,i_k]=\prod_{p\text{ prime}}p^{\max\{i_j^{(p)}\mid 1\leq j\leq k\}}.\label{lcmprimedecomp}
\end{equation}
The idea of the proof is to separate the factors in~\eqref{lcmprimedecomp} according to the maximizers corresponding to each one of the arguments while simultaneously taking careful notice not to include multiple possible maximizers equal to $\max\{i_j^{(p)}\mid 1\leq j\leq k\}$ into the resulting formula. For $k\geq 2$, the equality $[i_1,\ldots,i_k]=\alpha_1\cdots\alpha_k$ holds when
\[
\alpha_j=\prod_{\substack{p\text{ prime}\\ i_j^{(p)}\geq i_s^{(p)}~\text{for all }j<s\\ \text{and }i_j^{(p)}>i_s^{(p)}~\text{for all } j>s}}p^{i_j^{(p)}},\quad 1\leq j\leq k.
\]
By construction, it holds that $1\leq\alpha_j\leq i_j$ and $(\alpha_j,\alpha_s)=1$ for all $j\neq s$.  \quad
\end{proof}

\begin{proof}(Proof of Theorem~\ref{lcmranks}) The $k^\text{th}$ unfolding matrix of $A$ has the form
\[
(A_{[k]})_{(i_1,\ldots,i_k),(i_{k+1},\ldots,i_d)}=[[i_1,\ldots,i_k],[i_{k+1},\ldots,i_d]]=[\alpha,\beta],
\]
where we have introduced $\alpha\in([i_1,\ldots,i_k])_{(i_1,\ldots,i_k)\in\{1,\ldots,n\}^k}$ to enumerate the rows and $\beta\in([i_1,\ldots,i_{d-k}])_{(i_1,\ldots,i_{d-k})\in\{1,\ldots,n\}^{d-k}}$, the columns of $A_{[k]}$. The maximal TT rank can be bounded from above by
\begin{align*}
{\rm rank}_{\rm TT}A&=\max_{1\leq k\leq d-1}{\rm rank}\,A_{[k]}\\
&\leq\max_{1\leq k\leq d-1}\min\{\#\{[i_1,\ldots,i_k]\mid 1\leq i_j\leq n\},\#\{[i_1,\ldots,i_{d-k}]\mid 1\leq i_j\leq n\}\}\\
&=\max_{1\leq k\leq\lfloor d/2\rfloor}\#\{[i_1,\ldots,i_k]\mid 1\leq i_j\leq n\}\\
&=\#\{[i_1,\ldots,i_{\lfloor d/2\rfloor}]\mid 1\leq i_j\leq n\},
\end{align*}
where the penultimate equality follows from the fact that $A_{[k]}=A_{[d-k]}^\top$ have equal ranks and the final equality stems from the fact that $[i_1,\ldots,i_k,1]=[i_1,\ldots,i_k]$, which implies that the sequence $(\#\{[i_1,\ldots,i_k]\mid 1\leq i_j\leq n\})_{k=1}^d$ is nondecreasing. 

To bound the maximal TT rank from below, we select a submatrix $B_k$ of $A_{[k]}$ by setting
\[
(B_k)_{\alpha,\beta}=[\alpha,\beta],
\]
where $\alpha,\beta\in\{[i_1,\ldots,i_{\min\{k,d-k\}}]\mid 1\leq i_j\leq n\}$. Now $B_k$ is an LCM matrix defined on a factor closed set and thus invertible~\cite[Section~3]{Smith}, i.e., it has full rank. Hence
\[
{\rm rank}_{\rm TT}A\geq {\rm rank}\,A_{[\lfloor d/2\rfloor]}\geq{\rm rank}\,B_{\lfloor d/2\rfloor}=\#\{[i_1,\ldots,i_{\lfloor d/2\rfloor}]\mid 1\leq i_j\leq n\}.
\]
The claim follows from Lemma~\ref{lcmsetlemma} by observing that the obtained maximal TT rank is equal to the cardinality of the corresponding multiplication table.

The special case $d=3$ follows by observing that the only unfolding matrix of $A=[S_3]$ up to transposition is given by the rank-$n$ matrix
\[
(A_{[1]})_{i_1,(i_2,i_3)}=[i_1,\beta],
\]
where $1\leq i_1\leq n$ and $\beta\in\{1,\ldots,n\}^2$. This concludes the proof.\quad 
\end{proof}

We have tabulated the exact ranks of the LCM tensor given by Theorem~\ref{lcmranks} in Table~\ref{exactranks}. We note that the sequence corresponding to $4\leq d\leq 5$ is precisely the sequence A027435 in the Online Encyclopedia of Integer Sequences and the sequences corresponding to $d>5$ are its natural multivariable extensions. Unfortunately, no closed form representation is known to exist for the sequences with $d>3$. In fact, even the precise asymptotical behavior of these sequences is unknown since they may be regarded as a special case of the currently open \emph{Erd\H{o}s multiplication table problem}.

\begin{table}[!h]
\begin{center}
\begin{tabular}{c|cccccc}
&$n=2$&$n=3$&$n=4$&$n=5$&$n=6$&$n=7$\\
\hline $d=3$&$2$&$3$&$4$&$5$&$6$&$7$\\
$d=4$&$2$&$4$&$6$&$10$&$11$&$17$\\
$d=5$&$2$&$4$&$6$&$10$&$11$&$17$\\
$d=6$&$2$&$4$&$6$&$12$&$12$&$23$\\
$d=7$&$2$&$4$&$6$&$12$&$12$&$23$\\
$d=8$&$2$&$4$&$6$&$12$&$12$&$24$
\end{tabular}
\end{center}
\caption{Maximal TT ranks of the $n$-dimensional order $d$~LCM tensor $A=[S_d]$ by Theorem~\ref{lcmranks}.}\label{exactranks}
\end{table}

\section{Numerical experiments}\label{numex}
The Algorithms~\ref{ttshopm} and~\ref{ttgeap} were utilized in the solution of several tensor eigenvalue problems concerning GCD and LCM tensors. The GCD (Smith) tensors $A=(S_d)$ with $S=\{1,\ldots,n\}$ were generated using the representation of Proposition~\ref{meettheorem}. In this special case, the coefficients are given by Euler's totient function $D_k=\phi(k)$ for $k\in\{1,\ldots,n\}$ and the partial ordering is given by the ordinary divisibility relation $\preceq=|$. The LCM tensors were constructed using the TT-DMRG cross algorithm with local errors not exceeding $10^{-14}$~in the Frobenius norm.

In the case of GCD tensors, Proposition~\ref{meettheorem} permits writing the contractions using the compact formulae
\[
Ax^{d-1}=G_1(x^\top G^\top)^{d-2}(x^\top G_d^\top)\quad\text{and}\quad Ax^d=(x^\top G_1^\top)(x^\top G^\top)^{d-2}(x^\top G_d^\top),
\]
where the vector-tensor product is sparse yielding substantial computational savings.

We note that the use of TT-S-HOPM for the solution of the largest Z-eigenvalue in Subsection~\ref{dominantex} is justified since $f(x)=Ax^d$ is convex for all $x\in\mathbb{R}^n$. This can be easily verified by considering the decomposition~\eqref{candecomp}, which gives an expression for the Hessian matrix
\[
(H(x))_{i,j}=(\nabla^2f(x))_{i,j}=d(d-1)\sum_{k=1}^n\phi(k)(x^\top E_{\cdot,k})^{d-2}E_{i,k}E_{j,k}
\]
for all $i,j\in\{1,\ldots,n\}$ and $x\in\mathbb{R}^n$. Then for any $y\in\mathbb{R}^n$ we obtain
\begin{equation}
y^\top H(x)y=d(d-1)\sum_{k=1}^n\phi(k)(x^\top E_{\cdot,k})^{d-2}(y^\top E_{\cdot,k})^2\geq 0\label{convexity}
\end{equation}
whenever the order $d$ is even. Since the Hessian matrix is positive semidefinite for all $x\in\mathbb{R}^n$, it follows that $f$ is convex and TT-S-HOPM terminates. By setting $y=x$ in~\eqref{convexity}, it can also be verified that the Smith tensor $A$ is positive definite since $p(x)=Ax^d>0$ for all $x\in\mathbb{R}^n\setminus\{0\}$. Conditions for the positive definiteness of general meet tensors have been discussed in~\cite{ilhyp}.

\subsection{Dominant eigenvalues using TT-S-HOPM}\label{dominantex}
The dominant H- and Z-eigenvalues of the Smith tensor $A=(S_d)$, where $S=\{1,\ldots,n\}$, were computed using the TT-S-HOPM algorithm. Let $A$ be defined by the TT factors $G_1$, $G$, and $G_d$ as in Proposition~\ref{meettheorem}. The positive definiteness of the Smith tensor together with the upper bound imposed by Theorem~\ref{meetupperbound} imply for the H- and Z-eigenvalues of Smith tensors that
\[
0<\lambda\leq {\rm max}(G_1(\textbf{1}^\top G^\top)^{d-2}(\textbf{1}^\top G_d^\top)),\quad \textbf{1}=(1,\ldots,1)^\top\in\mathbb{R}^n,
\]
where the maximum is taken over the individual elements of the vector argument.

An ensemble of Smith tensors was generated using the sparse TT decomposition \eqref{meetttdecomp} and the H- and Z-eigenvalues were computed using
\begin{itemize}
\item[(i)] Moderate even order between $2\leq d\leq 20$ (Figure~\ref{dominantHeigenvalues}).
\item[(ii)] High even order with logarithmically spaced values between $22\leq d\leq 1000$ (Figure~\ref{dominantHeigenvalues2}).
\end{itemize}
To ensure that even with the highest dimensional tensors the accuracy was obtained, the computations were carried out by using long floating point numbers (1\,000 digit precision).

As initial guesses, we generated uniformly random $x_0\in [0,1]^n$ for H-eigenvalues and $x_0\in [-1,1]^n$ for Z-eigenvalues. TT-S-HOPM was terminated once either $20$ iterations had been completed, in which case the eigenvalue was rejected, or $|\lambda_{k}-\lambda_{k-1}|<10^{-14}$, in which case the eigenvalue was kept. For Z-eigenvalues, the test was repeated using $50$ different initial guesses for each pair $(d,n)$ to ensure that TT-S-HOPM did not converge to other local maxima. In every trial, TT-S-HOPM converged to the same Z-eigenvalue.

The dominant H-eigenvalues increase at the same rate as the theoretical upper bound imposed by Theorem~\ref{meetupperbound}, and the ratio between the upper bound and obtained dominant H-eigenvalues tends to unity with increasing $d$~and $n$. Increasing the tensor order thus improves the theoretical upper bound relative to the magnitude of the dominant H-eigenvalue. The largest Z-eigenvalues increase at a slower rate than H-eigenvalues and the theoretical estimate for H-eigenvalues has been tabulated alongside for comparison. The ratio of the upper bound of Theorem~\ref{meetupperbound} with respect to the largest Z-eigenvalues is consistently of the order of $n^{d/2-1}$.
\begin{figure}[!h]
\centering
\subfloat{{\includegraphics[height=5.5cm]{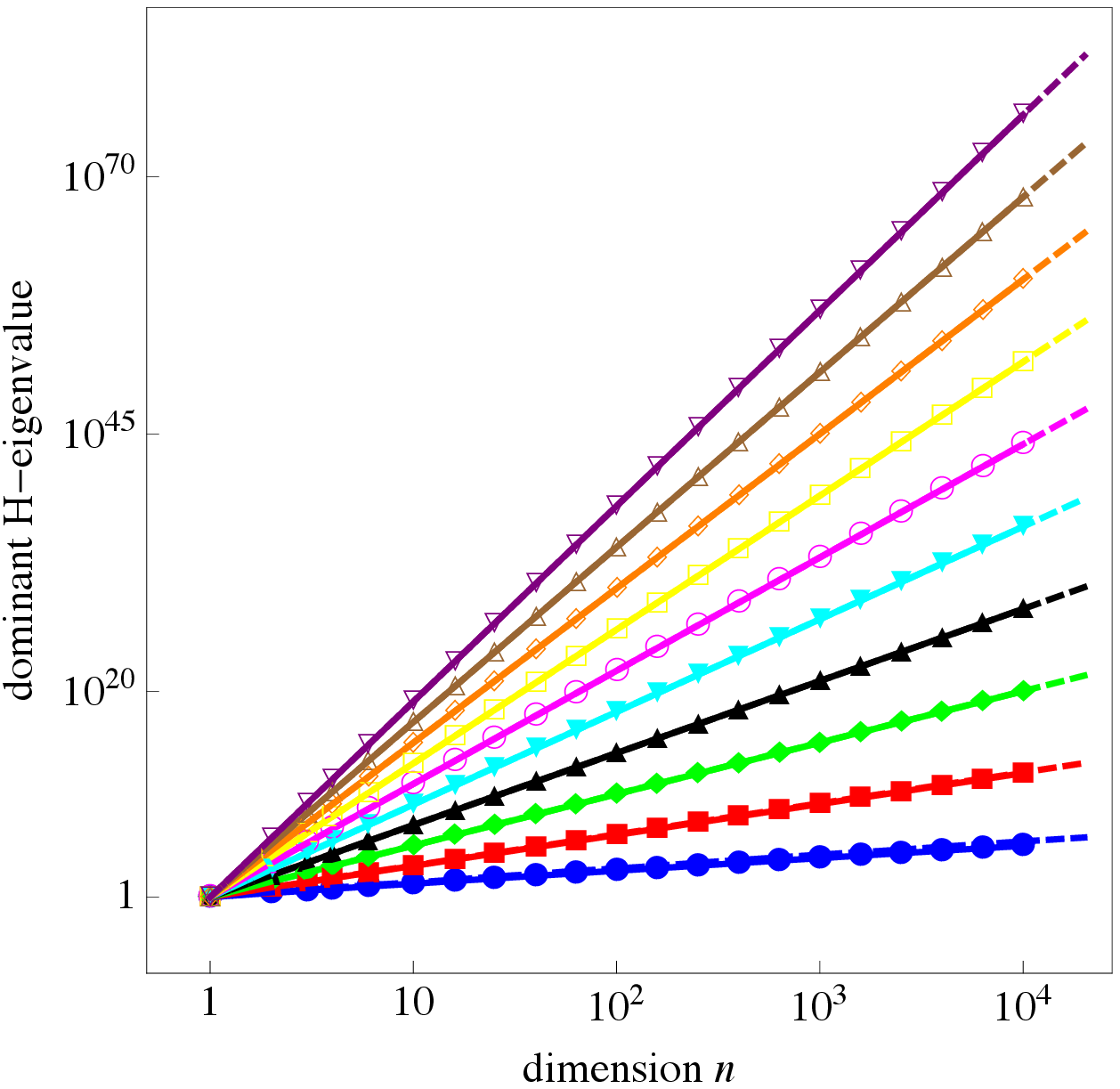}}}
\subfloat{{\includegraphics[height=5.5cm]{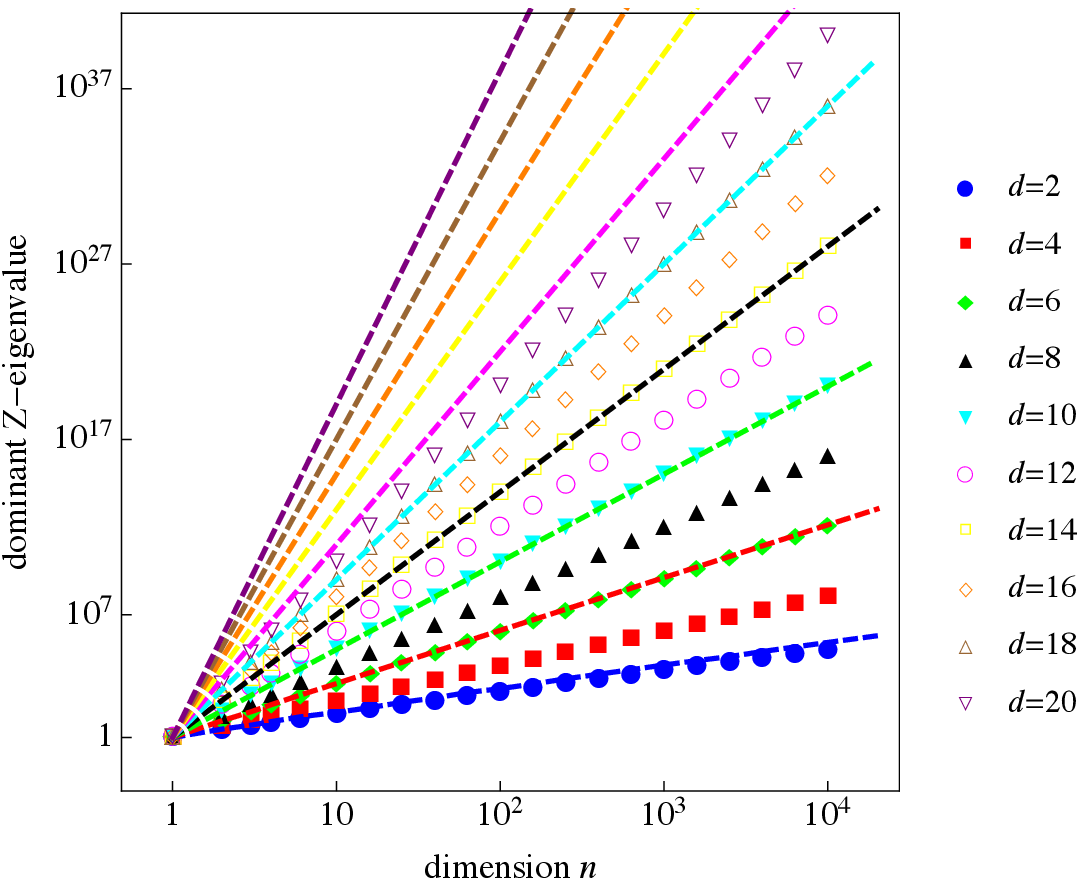}}}
\caption{Left: Dominant H-eigenvalues of the Smith tensor as functions of the dimension $n$. Right: Dominant Z-eigenvalues of the Smith tensor as functions of the dimension $n$. The dashed lines correspond to the theoretical bound imposed by Theorem~\ref{meetupperbound} on the H-eigenvalues corresponding to the same color. The bound for H-eigenvalues is also tabulated alongside Z-eigenvalues to highlight the significantly different rate of growth.}\label{dominantHeigenvalues}
\end{figure} 
\begin{figure}[!h]
\centering
\subfloat{{\includegraphics[height=5.4cm]{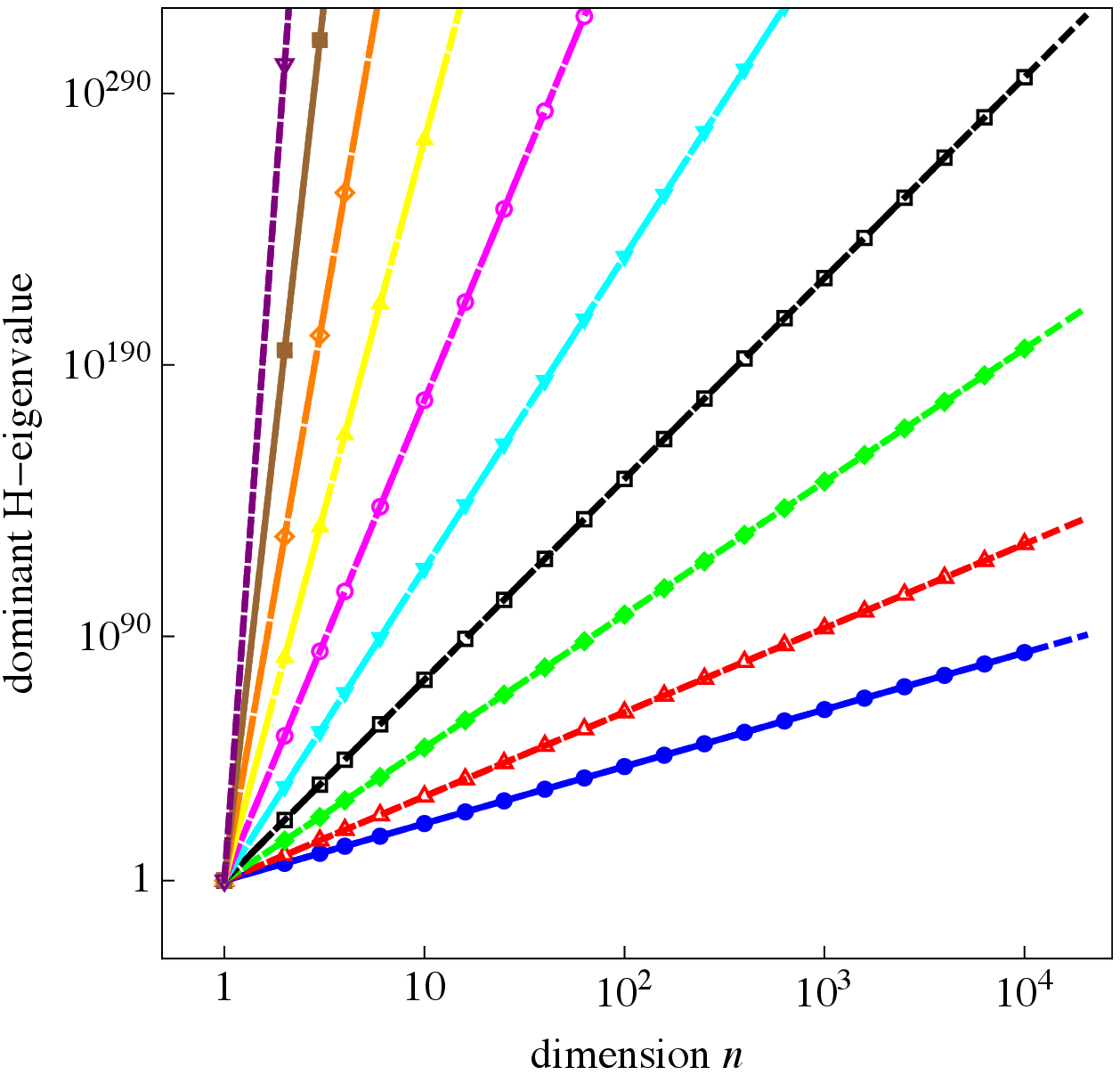}}}
\subfloat{{\includegraphics[height=5.4cm]{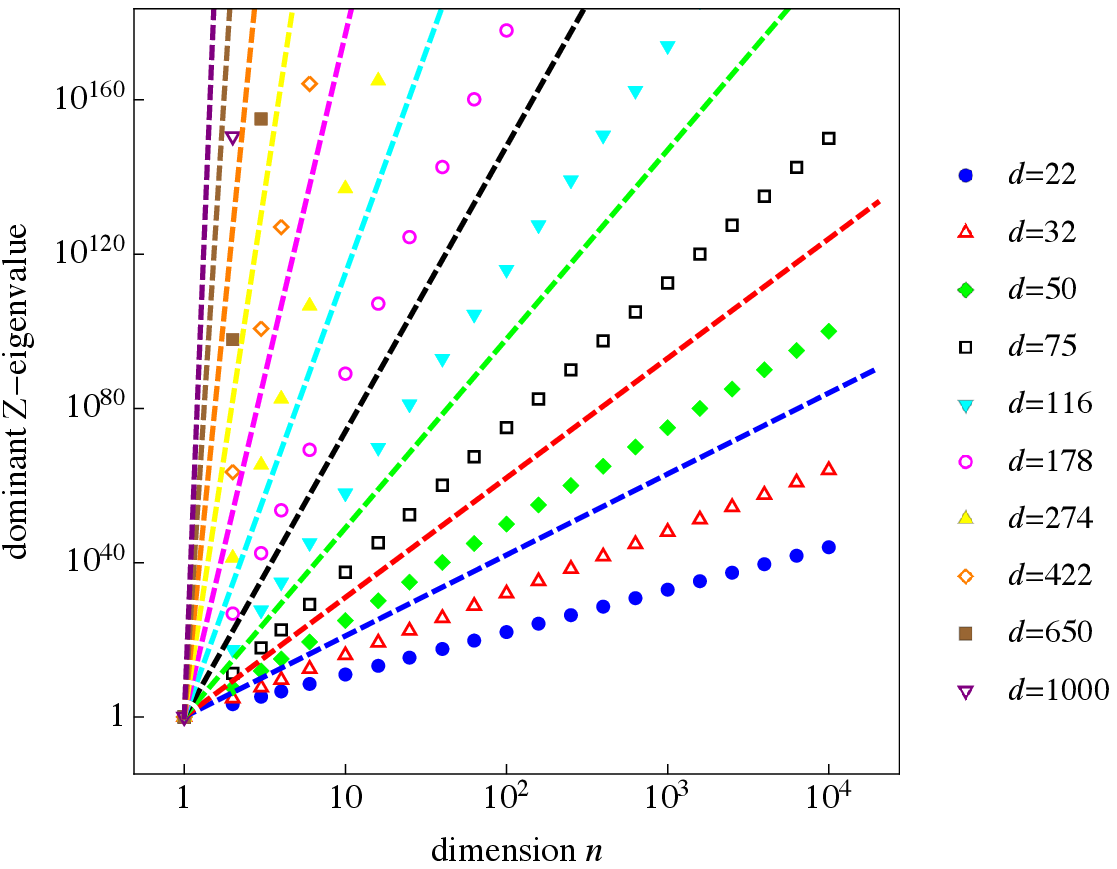}}}
\caption{Left: Dominant H-eigenvalues of the Smith tensor as functions of the dimension $n$. Right: Dominant Z-eigenvalues of the Smith tensor  as functions of the dimension $n$. The dashed lines correspond to the theoretical bound imposed by Theorem~\ref{meetupperbound} on the H-eigenvalues corresponding to the same color. The bound on H-eigenvalues is also tabulated alongside Z-eigenvalues to highlight the significantly different rate of growth.}\label{dominantZeigenvalues}\label{dominantHeigenvalues2}
\end{figure} 
\subsection{Minimal eigenvalues using TT-GEAP}\label{minimaleigs}
The TT-GEAP algorithm enables the computation of minimal H- and Z-eigenvalues. We continue with the study of the Smith tensor $A=(S_d)$, where $S=\{1,\ldots,n\}$. Unlike the dominant eigenvalues, not much is known about the behavior of minimal eigenvalues a priori other than they are positive. The results are displayed in two figures:
\begin{itemize}
\item[(iii)] The eigenvalues are displayed for fixed dimension $n$ and increasing order $d$ (Figure~\ref{minimalHeigenvalues1}).
\item[(iv)] The eigenvalues are displayed for fixed order $d$ and increasing dimension $n$ (Figure~\ref{minimalHeigenvalues2}).
\end{itemize}
\begin{figure}[!t]
\centering
\subfloat{{\includegraphics[height=.429\textwidth]{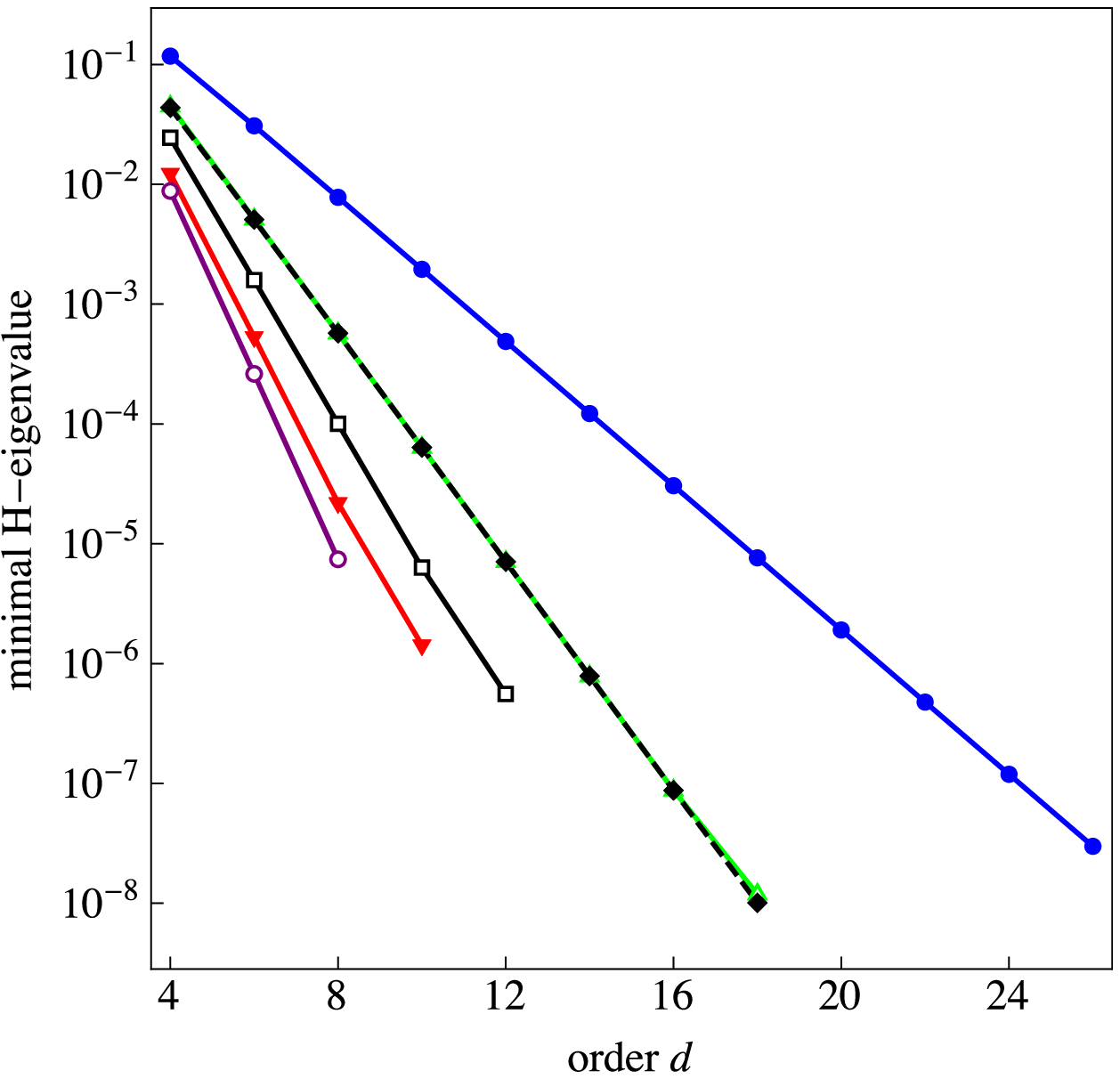}}}\subfloat{{\includegraphics[height=.429\textwidth]{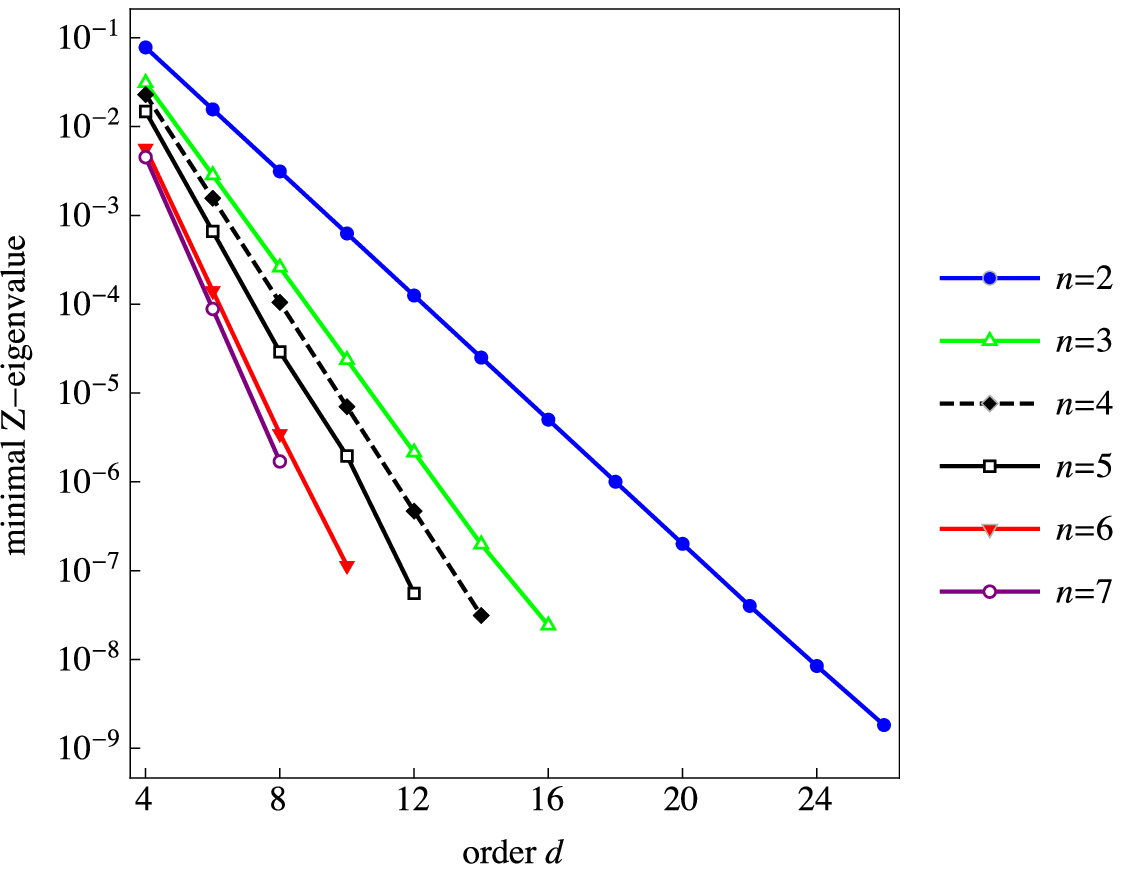}}}
\caption{Left: Minimal H-eigenvalues of the Smith tensor as functions of the order $d$. Right: Minimal Z-eigenvalues of the Smith tensor as functions of the order $d$.}\label{minimalHeigenvalues1}
\end{figure}
\begin{figure}[!t]
\centering
\subfloat{{\includegraphics[height=.44\textwidth]{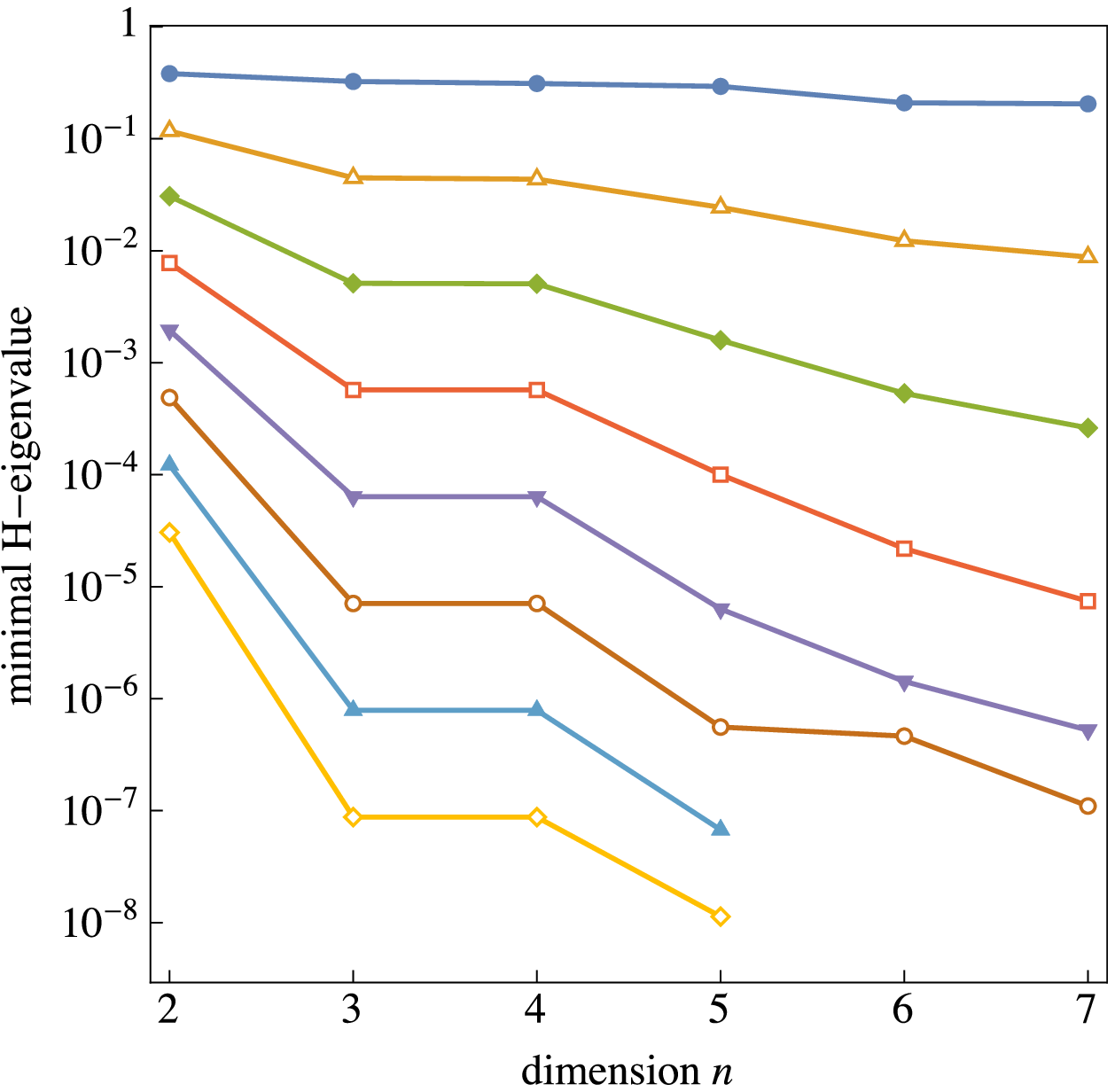}}}\subfloat{{\includegraphics[height=.442\textwidth]{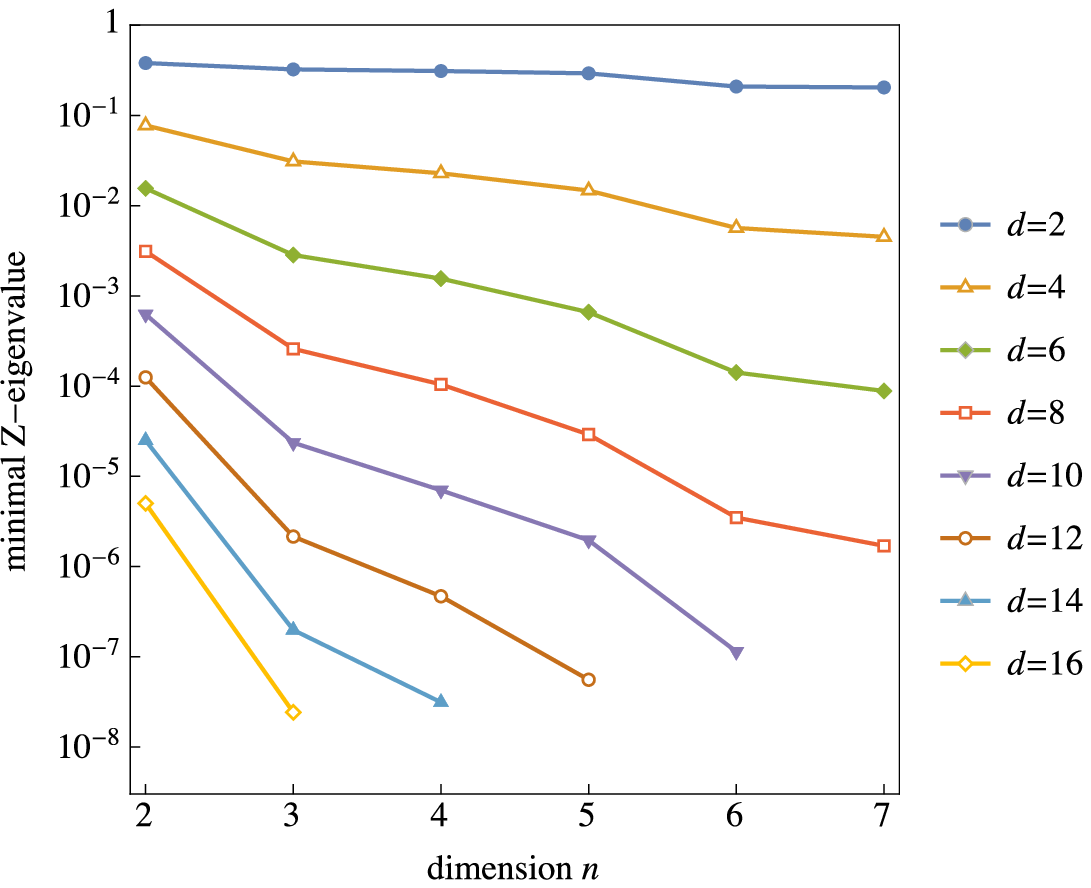}}}
\caption{Left: Minimal H-eigenvalues of the Smith tensor as functions of the dimension $n$. Right: Minimal Z-eigenvalues of the Smith tensor as functions of the dimension $n$.}\label{minimalHeigenvalues2}
\end{figure}
The threshold value $\tau=10$ was used for all cases. It is important to note that the GEAP algorithm does not guarantee convergence toward the minimal eigenvalue for each initial guess, and may instead converge to other local minima of~\eqref{geapopt}. This was found to occur in cases $d\geq 8$ and $n\geq 4$, $d\geq 14$ and $n\geq 3$, $d\geq 18$ and $n\geq 2$, however, the non-minimal eigenvalues returned by GEAP were always several orders of magnitude higher than the actual minimal eigenvalue. As a precaution, we utilized a prescreening stage where for each pair $(d,n)$, a set of $1\,000$ initial approximations of the minimal H- and Z-eigenvalues were computed, respectively, using $100$ iterations of TT-GEAP with different uniformly random initial guesses $x_0\in[-1,1]^n$. The initial guess producing the smallest eigenvalue in magnitude was then iterated using TT-GEAP until the termination criterion $|\lambda_k-\lambda_{k-1}|<10^{-14}$ was satisfied. The process terminated successfully for each pair $(d,n)$ and restarts were not necessary.

The minimal eigenvalues converge toward zero at an exponential rate for increasing order $d$ and fixed dimension $n$. The behavior of minimal H- and Z-eigenvalues is similar, both being monotonously decreasing for increasing dimension $n$. However, in the case of H-eigenvalues this becomes difficult to discern for increasing $d$. This behavior manifests itself as the overlapping H-eigenvalues for $n=3,4$ in the left-hand side of Figure~\ref{minimalHeigenvalues1} and as the plateau in the left-hand side of Figure~\ref{minimalHeigenvalues2}.

\subsection{Generalized eigenvalues using TT-GEAP}
The TT-GEAP algorithm also enables the computation of $B$-eigenpairs. We consider the minimal eigenvalues of the $B$-eigenvalue problem
\begin{equation}
Ax^{d-1}=\lambda Bx^{d-1},\label{lcmgcd}
\end{equation}
where $A=(S_d)$ and $B=[S_d]$ for $S=\{1,\ldots,n\}$. The problem~\eqref{lcmgcd} is a multidimensional extension of the generalized eigenvalue problem recently considered in~\cite{ik06}.

The results are presented in Figure~\ref{Bgraph1}, where we display
\begin{itemize}
\item[(v)] The absolute values of minimal $B$-eigenvalues for increasing order $d$ and fixed dimension $n$ (left-hand side).
\item[(vi)] The absolute values of minimal $B$-eigenvalues for increasing dimension $n$ and fixed order $d$ (right-hand side).
\end{itemize}
For $n=2$ and $(d,n)\in\{(4,3),(6,3)\}$, the threshold value $\tau=1$ was used. Otherwise the value $\tau=10$ was used instead. In this experiment, we utilized a prescreening stage where a set of $1\,000$ initial approximations of the minimal $B$-eigenvalues were computed using $100$ iterations of TT-GEAP with different uniformly random initial guesses $x_0\in[-1,1]^n$. The initial guess producing the smallest eigenvalue in magnitude was then iterated using TT-GEAP until the termination criterion $|\lambda_k-\lambda_{k-1}|<10^{-14}$ was satisfied. In the cases $n=2$ and $(d,n)=(6,5)$ the minimal $B$-eigenvalue has negative sign, in which case TT-GEAP has to be applied to the system
\[
Ax^{d-1}=\lambda(-B)x^{d-1}
\]
to obtain the actual minimal eigenvalue $-\lambda$. Due to the significantly more complex structure of the generalized eigenvalue spectrum, we only display the $B$-eigenvalues obtained using TT-GEAP which could be verified to be minimal by Mathematica 10's \texttt{NSolve} function.

By their behavior, the computed $B$-eigenvalues lie between H- and Z-eigenvalues: they tend to zero at a faster rate than respective H-eigenvalues, but slower than Z-eigenvalues. This should be contrasted with the matrix case, where the minimal $B$-eigenvalues of the system~\eqref{lcmgcd} tend to zero as $n\to\infty$ at a faster rate than the respective minimal eigenvalues of the Smith matrix.

The solution of dominant eigenpairs of~\eqref{lcmgcd} using TT-GEAP is possible for $n=2$. For $n>2$, however, we were not able to consistently recover the dominant $B$-eigenvalues and the experiments required several restarts in order to ensure convergence. The TT-GEAP algorithm appears to be very sensitive to the choice of the initial guess. Interestingly, in the low dimensional case it appears that the dominant $B$-eigenvalues are --- at least numerically --- nearly indistinguishable from saddle point solutions to~\eqref{geapopt}, making their solution very unstable using GEAP.

\begin{figure}[!t]
\centering
\subfloat{{\includegraphics[height=.433\textwidth]{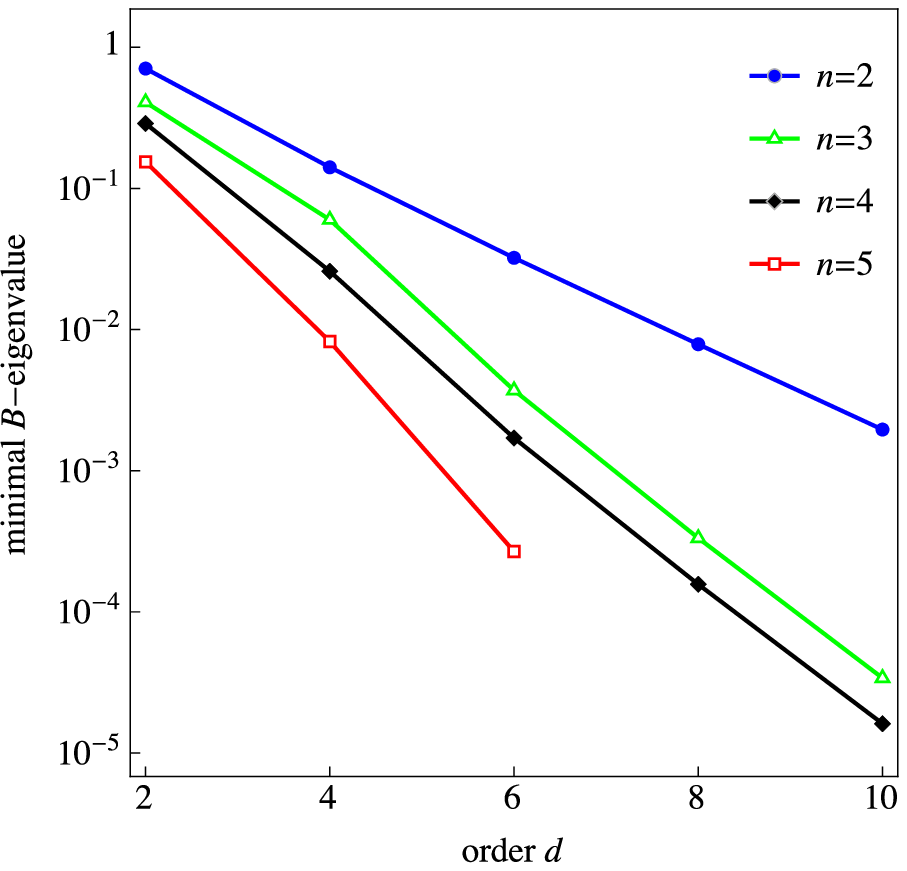}}}
\subfloat{{\includegraphics[height=.434\textwidth]{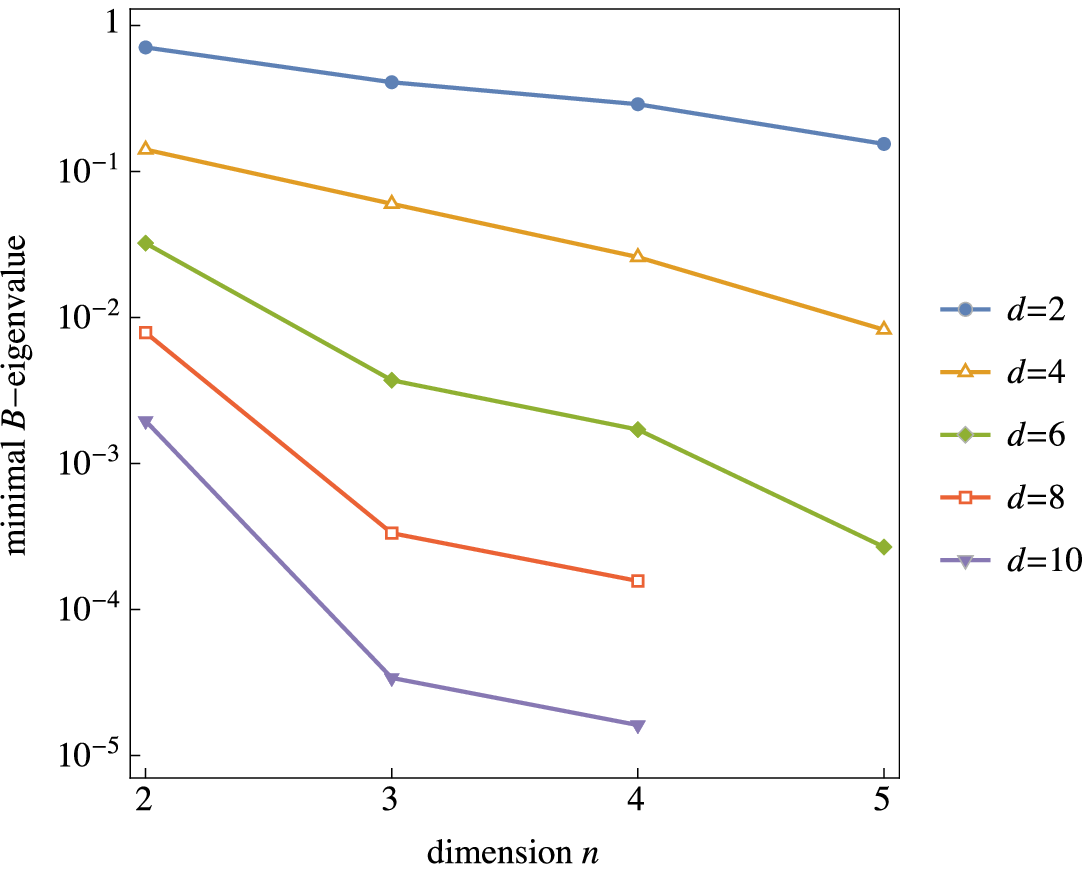}}}
\caption{Left: Absolute values of the minimal $B$-eigenvalues of the system~\eqref{lcmgcd} with increasing order $d$ and fixed dimension $n$. Right: Absolute values of the minimal $B$-eigenvalues with increasing dimension $n$ and fixed order $d$.}\label{Bgraph1}
\end{figure}

\section{Conclusions and future prospects}
Several algorithms for the solution of tensor eigenvalue problems have been proposed in recent years. In this work, we have modified the existing symmetric high-order power method (S-HOPM) and the generalized eigenproblem adaptive power method (GEAP) for use with the tensor-train format, which drastically lessens the computational complexity of these algorithms. The results of this paper are also applicable for low-rank tensors~\cite{GKT13} beyond the eigenvalues of the lattice-theoretic tensors.

We have demonstrated that meet tensors can be represented efficiently in the TT format with sparse TT cores and, in the special case of the Smith tensor, the tensor-train decomposition can be stored in only $\mathcal{O}(n\ln n)$ parameters independently of tensor order, making the numerical study of these objects feasible with very high dimensionality and order. For join tensors the TT-DMRG cross algorithm can be used to construct the TT decomposition numerically. Analogously to the case of the GCD tensors, we have shown that, in a special case of LCM tensors, the maximal TT rank grows in a controlled fashion, thus bounding the number of parameters needed to represent these tensors in the TT format. The TT-S-HOPM and TT-GEAP algorithms were used to compute the dominant and minimal H- and Z-eigenvalues of GCD tensors. We were also able to solve the minimal eigenvalues of the generalized eigenvalue problem for GCD tensors with respect to LCM tensors. 

The study of H- and Z-eigenvalues is still a largely underdeveloped field of study and theoretical bounds concerning these eigenvalues need to be researched further. For meet tensors an upper bound on eigenvalues has already been derived in a previous work~\cite{ilhyp} and this theoretical bound agrees well with our numerical experiments for GCD tensors. We have also considered the generalized eigenvalue problem for GCD and LCM tensors. This, we hope, motivates further study of generalized tensor eigenvalue problems. To this end, the decompositions of join tensors in particular require further attention.

The tensor-train decomposition has proven to be an invaluable tool in problems of high dimensionality and high order. Since a multitude of basic properties of tensor eigenvalue problems can be expressed naturally in terms of the tensor-train decomposition, it is a natural toolkit in the hands of the tensor eigenvalue community.

\section*{Acknowledgements}

The authors wish to thank the anonymous referees for their valuable comments and suggestions, which helped to improve this paper greatly.

\bibliographystyle{acm}
\bibliography{tevp}
\end{document}